\newcommand{\COLORON}{1}
\newcommand{\NOTESON}{0}
\newcommand{\Debug}{0}
\newcommand{\comment}[1]{}
\newcommand{\COMMENT}[1]{}
\definecolor{darkgray}{rgb}{0.3,0.3,0.3}
\newcommand{\defi}[1]{{\color{darkgray}\emph{#1}}}
\newtheorem{proposition}{Proposition}[section]
\newtheorem{definition}[proposition]{Definition}
\newtheorem{theorem}[proposition]{Theorem}
\newtheorem{corollary}[proposition]{Corollary}
\newtheorem{lemma}[proposition]{Lemma}
\newtheorem{conjecture}{{Conjecture}}[section]
\newtheorem{problem}[conjecture]{{Problem}}
\newtheorem{examp}[proposition]{Example}
\newcommand{\FIG}{0}
\newcommand{\note}[1]{ 

\hspace*{-30pt}
	{\color{blue}  NOTE: \color{Turquoise}{\small  \tt \begin{minipage}[c]{1.1\textwidth}  #1 \end{minipage} \ignorespacesafterend }} 
	
	}
\else \newcommand{\note}[1]{} \fi
\newcommand{\afsubm}[1]{ \ifnum \Debug = 1 {\mymargin{#1}}
\fi} 
\renewcommand{\color}[1]{}
\newcommand{\N}{\ensuremath{\mathbb N}}
\newcommand{\R}{\ensuremath{\mathbb R}}
\newcommand{\Z}{\ensuremath{\mathbb Z}}
\newcommand{\cf}{\ensuremath{\mathcal F}}
\newcommand{\eps}{\ensuremath{\epsilon}}
\newcommand{\sm}{\backslash}
\newcommand{\sydi}{\triangle}
\DeclareRobustCommand{\cev}[1]{%
  \mathpalette\do@cev{#1}%
}
\newcommand{\do@cev}[2]{%
  \fix@cev{#1}{+}%
  \reflectbox{$\m@th#1\vec{\reflectbox{$\fix@cev{#1}{-}\m@th#1#2\fix@cev{#1}{+}$}}$}%
  \fix@cev{#1}{-}%
}
\newcommand{\fix@cev}[2]{%
  \ifx#1\displaystyle
    \mkern#23mu
  \else
    \ifx#1\textstyle
      \mkern#23mu
    \else
      \ifx#1\scriptstyle
        \mkern#22mu
      \else
        \mkern#22mu
      \fi
    \fi
  \fi
}
\newcommand{\nin}{\ensuremath{{n\in\N}}}
\newcommand{\seq}[1]{\ensuremath{(#1_n)_{n\in\N}}} 
\newcommand{\g}{\ensuremath{G\ }}
\newcommand{\G}{\ensuremath{G}}
\newcommand{\knl}{Kirchhoff's node law}
\newcommand{\kcl}{Kirchhoff's cycle law}
\newcommand{\are}{\vec{e}}
\newcommand{\arE}{\vec{E}}
\newcommand{\Cg}{Cayley graph}
\newcommand{\Lr}[1]{Lemma~\ref{#1}}
\newcommand{\Tr}[1]{Theorem~\ref{#1}}
\newcommand{\Sr}[1]{Section~\ref{#1}}
\newcommand{\Prr}[1]{Pro\-position~\ref{#1}}
\newcommand{\Prb}[1]{Problem~\ref{#1}}
\newcommand{\Cr}[1]{Corollary~\ref{#1}}
\newcommand{\lf}{locally finite}
\renewcommand{\iff}{if and only if}
\newcommand{\fe}{for every}
\newcommand{\st}{such that}
\newcommand{\ti}{there is}
\newcommand{\obda}{without loss of generality}
\newcommand{\labtequ}[2]{
 \begin{equation} \label{#1} 	\begin{minipage}[c]{0.9\textwidth}  #2 \end{minipage} \ignorespacesafterend \end{equation} }
\newcommand{\mymargin}[1]{
 \ifnum \Debug = 1
  \marginpar{%
    \begin{minipage}{\marginparwidth}\small%
      \begin{flushleft}%
        {\color{blue}#1}%
      \end{flushleft}%
   \end{minipage}%
  }%
 \fi
}%
\newcommand{\extras}[1]{
 \ifnum \Debug = 1
\section{Extras} #1
 \fi
}%
\newcommand{\mySection}[2]{}
\newcommand{\Gha}{\Gamma}
\newcommand{\bZ}{{\mathbb Z}}
\newtheorem{cla}{Claim}
\newcommand{\Aut}{\mbox{Aut}}
\newcommand{\Gpa}{G}
\newcommand{\sC}{\mathcal{C}}
\newcommand{\diam}{\mathrm{diam}}
\newcommand{\ar}[1]{\overrightarrow{#1}}
\newcommand{\ra}[1]{\overleftarrow{#1}}
\newcommand{\Lhf}{Lipschitz harmonic function}
\begin{document}
	\title{A study of 2-ended graphs via harmonic functions}
	
\author[1]{Agelos Georgakopoulos\thanks{Supported by  EPSRC grants EP/V048821/1 and EP/V009044/1.}}
\affil[1,2]{  {Mathematics Institute, University of Warwick}\\
  {\small CV4 7AL, UK}}
\author[2]{Alex Wendland}

\date{\today}

\maketitle

\begin{abstract}
We prove that every recurrent graph $G$ quasi-isometric to $\mathbb{R}$ admits an essentially unique Lipschitz harmonic function $h$. If $G$ is vertex-transitive, then the action of $Aut(G)$ preserves $\partial h$ up to a sign, a fact that we exploit to prove various combinatorial results about $G$. As a consequence, we prove the 2-ended case of the conjecture of Grimmett \& Li that the connective constant of a non-degenerate vertex-transitive graph is at least the golden mean. 

Moreover, answering a question of Watkins from 1990, we construct a cubic, 2-ended, vertex-transitive graph which is not a Cayley graph.  
\end{abstract}

\medskip

{\noindent {\bf Keywords:} \small 2-ended graph, Lipschitz harmonic function, vertex-transitive, Cayley graph, connective constant, golden mean, electrical current, quasi-isometry.} 

\smallskip
{\noindent \small  {\bf MSC 2020 Classification:}} 31C05, 31C20, 05C63, 05E18, 05C21, 05C50,  60J45, 31C12.

\section{Introduction}

An influential paper of Kleiner \cite{KleNew} provides a proof of Gromov's polynomial growth theorem using harmonic functions on \Cg s. Shalom \& Tao \cite{ShaTaoFin} refined Kleiner's method providing tighter quantitative results, thereby proving that every finitely generated \Cg\ \g admits a non-constant \Lhf, and if \g has polynomial growth, then the vector space of those functions has finite dimension.
These proofs triggered significant interest in the study of \Lhf s on graphs, see \cite{BDKY,BoCoDa,HuJoLiGeo} and references therein. The study of harmonic functions with restrictions on their growth is classical in Riemannian geometry \cite{CheYauDif, ColMinHar, YauNon}, and the origins of the aforementioned proofs can be found there; see \cite{HuaJosPol} for a nice overview. 

Our first result provides the exact dimension of the (real) vector space of \Lhf s on a 2-ended \Cg\ \G, and more generally on graphs that asymptotically resemble \R. Recall that a function $h: V(G) \to \R$ is \defi{harmonic}, if $h(v)$ equals the average of $h(u)$ over the neighbours $u$ of $v$, and it is \defi{Lipschitz}, if $|h(v)-h(u)|<C$ holds for a fixed constant $C$ and every edge $vu\in E(G)$.
\begin{theorem} \label{dim Lip}
Let \g be a \lf, recurrent graph which is quasi-isometric to $\R$. Then the vector space of Lipschitz harmonic functions on \g has dimension equal to 2. 
\end{theorem}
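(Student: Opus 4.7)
The plan is to show existence of one non-constant Lipschitz harmonic function and then uniqueness up to adding constants. Since constants are Lipschitz and harmonic, the space of \Lhf s has dimension at least $1$; we would produce an independent $h$ and prove every Lipschitz harmonic $g$ is of the form $\alpha h+\beta$ for $\alpha,\beta\in\R$.

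For existence, because $G$ has two ends, I would fix a finite edge cut $F=E(X,Y)$ with $X,Y$ infinite, each containing one end. Take an exhaustion $G_1\subseteq G_2\subseteq\cdots$ of $G$ by finite connected subgraphs containing $F$, contract $X\setminus V(G_n)$ to a source $s_n$ and $Y\setminus V(G_n)$ to a sink $t_n$, and let $h_n$ be the electric potential of the unit-current flow from $s_n$ to $t_n$, normalized by $h_n(v_0)=0$ at a fixed basepoint $v_0$. The essential quantitative input is that the quasi-isometry $G\sim\R$ yields a uniform bound on the size of any finite edge cut separating the two ends, and combining this with conservation of current forces a uniform bound on all edge currents $|h_n(v)-h_n(u)|$. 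A diagonal extraction then produces a pointwise limit $h$ which is harmonic, Lipschitz, and has unit flux across $F$, hence non-constant.

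For uniqueness, given any Lipschitz harmonic $g$ I would define the flux $\Phi(g):=\sum_{uv\in F,\,u\in X}(g(v)-g(u))$; summing the node law over a finite region between two cuts separating the ends shows that $\Phi(g)$ is independent of the choice of such cut. After replacing $g$ by $g-\Phi(g)\cdot h$ we may assume $\Phi(g)=0$ and it suffices to show $g$ is constant. The key claim is that a zero-flux Lipschitz harmonic function is bounded, for then the Liouville property of recurrent graphs finishes the proof: $g(X_t)$ is a bounded martingale under the a.s.\ recurrent simple random walk, hence converges a.s., but it must equal $g(v_0)$ along the a.s.\ infinite sequence of returns to $v_0$, forcing $g\equiv g(v_0)$.

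The main obstacle is the boundedness step. The plan is to use the quasi-isometry $f:G\to\R$ to partition $V(G)$ into shells $V_n:=f^{-1}([n,n{+}1))$ of uniformly bounded diameter and with uniformly bounded interface $F_n=E(V_{\le n},V_{>n})$. Lipschitzness of $g$ then bounds its oscillation on each $V_n$ by a constant independent of $n$, while the zero-flux condition across each $F_n$, which reads $\sum_{uv\in F_n,\,u\in V_{\le n}}(g(v)-g(u))=0$, prevents $g$ from drifting systematically from one shell to the next. A careful telescoping argument relating the shell-means of $g$ to the integrated flux across consecutive cuts should then yield that the sequence of shell-means of $g$ stays bounded, and combined with the oscillation bound on each shell this gives a global bound on $g$, completing the proof.
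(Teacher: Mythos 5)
Your overall architecture (existence via limits of unit-current potentials along an exhaustion; uniqueness by showing a zero-flux Lipschitz harmonic function is constant, using flux-invariance across separating cuts) matches the paper's, but two of your key steps do not hold up. First, a quasi-isometry to $\R$ does \emph{not} give a uniform bound on the size of finite edge cuts separating the ends: quasi-isometries control diameters, not cardinalities, and a locally finite graph QI to $\R$ can have shells $f^{-1}([n,n+1))$ of unbounded size and hence separating cuts of unbounded size (e.g.\ a ``thickened line'' whose $n$-th level is a clique on $n$ vertices joined completely to the adjacent levels). You use this false bound twice. In the existence step the conclusion you want is still true, but for a different reason: for the unit current in a finite network, \emph{no single edge} carries flow of absolute value more than $1$ (proved by cutting along a level set of the potential and using conservation), which is exactly the paper's Proposition~\ref{prop leq1}; this gives the Lipschitz bound with no reference to cut sizes.

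The more serious gap is the boundedness step in the uniqueness half, which you yourself flag as the main obstacle. The zero-flux condition across the shell cut $F_n$, combined with the bounded oscillation $O(CD)$ of $g$ on each shell, yields only $|m_{n+1}-m_n|\le O(CD)$ for the shell-means $m_n$: writing $g(v)-g(u)=(m_{n+1}-m_n)+\epsilon_{uv}$ with $|\epsilon_{uv}|=O(CD)$ and summing over $F_n$ gives $|F_n|\,|m_{n+1}-m_n|\le\sum|\epsilon_{uv}|$, i.e.\ a bound on the \emph{per-shell} drift only. Telescoping this permits linear growth of $m_n$, so no global bound on $g$ follows; indeed $g(n)=n$ on $\Z$ satisfies all your shell estimates except the zero-flux identity itself, which in general cuts down the drift by at most a constant per shell. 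Recurrence must enter before boundedness, not only afterwards in the Liouville step. The paper's route is: on a recurrent graph a non-constant harmonic function admits no positive representative, hence is unbounded both above and below; the quasi-isometry and Lipschitz condition then force the sets where $h\to+\infty$ and $h\to-\infty$ to accumulate at the two distinct ends; consequently the sign cut $E(P,N)$ with $P=\{h\ge 0\}$, $N=\{h<0\}$ is a finite, non-empty cut separating the ends across which every edge contributes strictly positively to the flux, contradicting $\partial h(X,Y)=0$ via the cut-invariance lemma. You would need to replace your telescoping argument by something of this kind (or otherwise inject recurrence into the boundedness claim) for the proof to close.
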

In other words, every such graph admits a non-constant \Lhf, which is unique up to an additive and a multiplicative constant.

We remark that since recurrence is preserved under quasi-isometries between bounded degree graphs \cite[THEOREM~3.2]{CThyperb}, we can replace the recurrence condition by the bounded degree condition in \Tr{dim Lip}. \Tr{dim Lip} probably applies to Riemannian manifolds of bounded geometry that are quasi-isometric to $\R$, with a similar proof. 

The type of harmonic functions supported by a graph or manifold is in general unstable under quasi-isometries \cite{BenIns,LyoIns}, and so it is not easy to deduce \Tr{dim Lip} from the analogous statement for $\R$ or $\Z$. Our proof of \Tr{dim Lip} is not difficult, but uses a mixture of ingredients involving physical intuition around electrical networks, coarse geometry, and the classical fact that a recurrent graph admits no positive, non-constant, harmonic functions. The main message of this paper is perhaps that \Tr{dim Lip} has direct implications for the combinatorial structure of 2-ended vertex-transitive graphs. We say that a graph is \defi{$k$-regular} if all its vertices have degree $k$, and say that it is  \defi{cubic} if it is $3$-regular. As a first application, we use \Tr{dim Lip} to prove

\begin{corollary} \label{cor pm}
Every cubic, 2-ended, vertex-transitive graph \g is 3-edge-colourable. \mymargin{Moreover, it is either bipartite or a \Cg\ generated by 3 involutions???}
\end{corollary}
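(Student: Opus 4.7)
The plan is to read off a $3$-edge-colouring directly from the essentially unique Lipschitz harmonic function $h$ provided by \Tr{dim Lip}. A $2$-ended vertex-transitive graph has linear growth, hence is recurrent and quasi-isometric to \R, so the theorem applies and $h$ is determined up to affine transformations. The key input is the announcement in the abstract that $\Aut(\G)$ preserves $\partial h$ up to a global sign; this implies that $|h(u)-h(v)|$ depends only on the $\Aut(\G)$-orbit of the edge $uv$, and in particular, by vertex-transitivity, that the multiset $\{|h(u)-h(v)|:u\sim v\}$ is a single fixed set $\{x,y,z\}$ at every $v\in V(\G)$. The harmonic identity $\sum_{u\sim v}(h(u)-h(v))=0$ then restricts the admissible sign patterns: at every vertex there is a choice of signs with $d_1+d_2+d_3=0$ and $|d_i|\in\{x,y,z\}$.

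I would then case-split on $\{x,y,z\}$. If the three values are pairwise distinct, colouring each edge $uv$ by $|h(u)-h(v)|$ already partitions $E(\G)$ into three perfect matchings and we are done. Otherwise, using $d_1+d_2+d_3=0$ and the non-constancy of $h$, the multiset must be $\{0,a,a\}$ or $\{a,a,2a\}$ for some $a>0$ (the case $\{a,a,a\}$ with $a>0$ is excluded because no choice of signs sums to zero). In both sub-cases the edges carrying the isolated value form a $1$-regular subgraph $M$, i.e.\ a perfect matching, and the task reduces to $2$-edge-colouring the $2$-regular subgraph $H:=\G-M$.

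For that, I would trace the signed increments $d_i:=h(v_{i+1})-h(v_i)$ along any finite cycle $v_0v_1\cdots v_n=v_0$ of $H$. In the $\{a,a,2a\}$ regime, at each $v_i$ the two edges of $H$ incident to $v_i$ carry identical signed $h$-differences from $v_i$, which forces $d_i=-d_{i-1}$; the values alternate $+a,-a,+a,\ldots$, so $n$ is even. In the $\{0,a,a\}$ regime they carry opposite signed differences, forcing $d_i=d_{i-1}$, so $h$ is monotone along the cycle, contradicting $\sum d_i=0$. Hence all finite cycles of $H$ are even, and the remaining infinite components (necessarily bi-infinite paths, since $H$ is $2$-regular in a $2$-ended graph) also admit a $2$-edge-colouring; together with $M$ this yields the required $3$-edge-colouring. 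The main obstacle is the preliminary step of making the $\Aut(\G)$-invariance of $\partial h$ up to sign precise enough to deduce the orbit-invariance of $|h(u)-h(v)|$; once that is in hand the combinatorics above is routine.
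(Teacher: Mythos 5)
Your proposal is correct and takes essentially the same route as the paper's proof: it uses the skew-difference-invariant harmonic function constructed in the proof of \Tr{muphi} (which is exactly the orbit-invariance of $|h(u)-h(v)|$ that you flag as the remaining obstacle, obtained there from \Tr{dim Lip} and \Lr{lem cuts}), splits into the same three cases on the multiset of increments at a vertex, peels off the perfect matching carrying the isolated value, and $2$-colours the remaining $2$-regular subgraph using evenness of its cycles. The only cosmetic difference is that you derive the evenness from the local recurrences $d_i=\pm d_{i-1}$, whereas the paper simply sums the $\pm 1$ increments around a cycle.
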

Since transitive graphs are regular, we immediately deduce from this that the edge set of \g decomposes into three perfect matchings. Cubic graphs are special among the 2-ended, vertex-transitive graphs in that they cannot arise as a cartesian product of a finite graph and a 2-way infinite path, and so understanding the cubic case is an important first step towards understanding all 2-ended vertex-transitive graphs.

We remark that it is a well-known open problem whether every finite, cubic, Cayley graph is $3$-edge-colourable \cite{AlLiZh}. It is known that every infinite, connected, vertex-transitive graph admits a perfect matching \cite{csoka_invariant_2017,Leemann}.

\comment{
\medskip
Using \Tr{dim Lip} again, we will prove
\begin{corollary} \label{cor bip}
Every cubic, 2-ended, vertex-transitive graph is bipartite.
\end{corollary}
This becomes false in the $k$-regular case for $k>3$, a counterexample being the cartesian product $K_{k-2} \times Z$, where $K_n$ stands for the complete graph on $n$ vertices, and $Z$ stands for the 2-way infinite path. }

\medskip
Using \Tr{dim Lip} again, we obtain a simple proof of a theorem of Watkins \cite{WatEdg} saying that every 2-ended edge-transitive graph has even vertex degrees (\Cr{cor ET}).
 
\medskip
Techniques using electrical networks or harmonic functions have found many applications, but usually to the study of  stochastic processes or analytic aspects of graphs; see e.g.\ \cite{planarPB,squareRiemann,GurNachRec,LyonsBook}. Purely combinatorial implications like the above are perhaps more surprising. 

\medskip
The \defi{connective constant $\mu(G)$} of an infinite graph \g is the exponential growth rate of self-avoiding walks in \g starting at a fixed vertex. This invariant is of great interest to the statistical mechanics community, see e.g.\ \cite{DCSmiCon,GriLiLoc} and references therein. As another consequence of \Tr{dim Lip}, we prove
\begin{theorem} \label{muphi}
Every 2-ended vertex-transitive graph \g satisfies $\mu(G)\geq \phi$, where $\phi:=\frac{1+\sqrt{5}}{2}$ is the golden mean.
\end{theorem}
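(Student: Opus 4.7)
The plan is to exploit \Tr{dim Lip}, which provides a Lipschitz harmonic function $h$ on $G$ that is unique up to affine transformations, together with the fact established earlier in the paper that $\mathrm{Aut}(G)$ preserves $\partial h$ up to a sign. At each vertex $v$ I would partition the neighbours into up-, down- and level-neighbours
\[ U(v)=\{u\sim v:h(u)>h(v)\},\ D(v)=\{u\sim v:h(u)<h(v)\},\ E(v)=\{u\sim v:h(u)=h(v)\}. \]
Vertex-transitivity together with the sign-invariance of $\partial h$ makes the multiset $\{|U(v)|,|D(v)|\}$ and the cardinality $|E(v)|$ independent of $v$; harmonicity plus non-constancy of $h$ forces $|U(v)|,|D(v)|\ge 1$; and excluding the degenerate case $G=\mathbb Z$ forces the common degree $d\ge 3$.

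The main case to address is $|U(v)|=|D(v)|=1$ at every $v$, so that $\ell:=|E(v)|=d-2\ge 1$. I would call a self-avoiding walk $v_0,v_1,\ldots,v_n$ \emph{good} if every step is either to the unique up-neighbour of the current vertex or to a level-neighbour, subject to never taking two consecutive level-steps. A quick check shows that good walks are automatically self-avoiding: the heights $h(v_i)$ are weakly increasing and strictly increase at every up-step, so the walk decomposes into constant-height blocks of size at most~$2$; within a block the two vertices are adjacent (hence distinct), and between blocks the heights differ. By transitivity the number $A_n$ of good walks of length $n$ from a fixed vertex is independent of the starting vertex, and a decomposition by the first step --- with $B_n$ the analogous count restricted to walks whose first step is an up-step --- gives $A_n=A_{n-1}+(d-2)\,B_{n-1}$ and $B_n=A_{n-1}$, hence
\[ A_n = A_{n-1} + (d-2)\,A_{n-2}. \]
The dominant characteristic root $\tfrac12(1+\sqrt{4d-7})$ equals $\phi$ when $d=3$ and exceeds $\phi$ otherwise, so $A_n \le c_n$ already yields $\mu(G)\ge\phi$.

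When $|U(v)|\ge 2$ holds at every vertex the bound is even easier: monotone up-walks are automatically self-avoiding and offer $\ge 2$ continuations at each step, giving $\ge 2^n$ SAWs and hence $\mu(G)\ge 2>\phi$. The hard part will be the remaining ``mixed'' sub-case, in which $\{|U(v)|,|D(v)|\}=\{1,k\}$ with $k\ge 2$ but $|U|\ne|D|$ pointwise, so that the two sign-preserving $\mathrm{Aut}(G)$-orbits of $V(G)$ realise the values $1$ and $k$ in opposite ways. Here level-neighbours may be absent and a naive up-only walk can be trapped in the $|U|=1$ orbit after a single step. My plan for this regime is to build SAWs that alternate heavy-up moves from vertices in the $|U|=k$ orbit with heavy-down moves from vertices in the $|D|=k$ orbit, using the sign-reversing automorphism pairing the two orbits to ensure a heavy direction is available again within a bounded number of steps, and using monotonicity of $h$ within each monotone phase to secure self-avoidance; packaging this as a two-step recurrence should again yield a Perron root at least $\phi$.
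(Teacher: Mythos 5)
Your construction of the harmonic function and the observation that $\Aut(G)$ preserves the multiset $\{|U(v)|,|D(v)|\}$ is exactly the content of the paper's own proof, which however stops there: it builds the skew-difference-invariant function $g$ from \Tr{dim Lip} and \Lr{lem cuts} and then invokes the combinatorial self-avoiding-walk count of Grimmett \& Li verbatim (the degree-$\geq 4$ case being quoted from their earlier work). You instead attempt to redo that combinatorial count. Your Case~1 ($|U|=|D|=1$, Fibonacci-type recurrence via up- and level-steps) and the case $|U(v)|\geq 2$ at every vertex are correct in substance, modulo one small repair: $A_n$ need not be literally independent of the starting vertex, because an automorphism carrying $v$ to $w$ may reverse the sign of $\partial h$ and hence carry ``up-good'' walks to ``down-good'' walks. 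This is fixed by working with $a_n:=\min\{A^{+}_n(v),A^{-}_n(v)\}$, which transitivity does make vertex-independent and which satisfies the same recurrence as an inequality.

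The mixed sub-case, however, is a genuine gap, and not a peripheral one: for cubic graphs it is precisely Case~2 in the proof of \Cr{cor pm}, where after rescaling one $\Aut(G)$-orbit sees neighbour-differences $(-\tfrac12,-\tfrac12,1)$ and the other sees $(-1,\tfrac12,\tfrac12)$. Two things go wrong with your plan there. First, strictly $h$-increasing walks necessarily alternate a vertex with two up-neighbours and a vertex with one, so they number only about $2^{n/2}$, and $\sqrt2<\phi$; monotone walks alone cannot close this case. Second, your proposed remedy --- alternating ``heavy-up'' and ``heavy-down'' phases --- abandons global monotonicity of $h$ along the walk, and then ``monotonicity within each phase'' no longer yields self-avoidance: a descending phase re-enters height intervals the walk has already visited and can collide with earlier vertices. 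Constructing non-monotone walks that are provably self-avoiding and still achieve growth rate $\phi$ (by controlling how far each excursion dips below the running maximum of $h$ and how successive excursions are separated) is exactly the delicate part of Grimmett \& Li's cubic argument, and your proposal does not supply it. As written the proof is incomplete; you must either carry out that analysis or, as the paper does, quote it.
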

This settles the 2-ended case of a conjecture of Grimmett \& Li \cite{GL16} stating that $\mu(G)\geq \phi$ holds \fe\ infinite, vertex-transitive \G\ other than the 2-way infinite path. Grimmett \& Li \cite{GL16} proved the special case of  \Tr{muphi} where \g is a \Cg, and our approach via harmonic functions closely follows theirs. They thereby revived an old question of Watkins \cite{Wa90}, asking whether \ti\ a cubic, 2-ended, vertex-transitive graph which is not a Cayley graph. In \Sr{sec Watkins} we answer this question by constructing\footnote{This construction was originally announced in the arXiv version of \cite{partite}, but following a referee suggestion it was removed from the journal version.} such a graph:

\begin{theorem} \label{nonCay}
There is a cubic, 2-ended, vertex-transitive graph which is not isomorphic to any \Cg.
\end{theorem}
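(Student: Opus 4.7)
The plan is to construct an explicit cubic, 2-ended, vertex-transitive graph $G$ and then invoke \Tr{dim Lip} to rule out the existence of any subgroup of $\Aut(G)$ acting regularly on $V(G)$.

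For the construction, I would take $G$ to be a ``fiber bundle'' over $\Z$: the vertex set is $\Z \times F$ for a suitable finite set $F$, each fiber $\{i\} \times F$ carries a fixed graph $H$, and consecutive fibers are joined by a fixed matching $M$. One chooses $H$ and $M$ so that every vertex of $G$ has degree $3$ and so that a subgroup $\Gamma_0 \leq \mathrm{Sym}(F)$ preserves both $H$ and $M$ and acts transitively on $F$. Then the shift $(i, f) \mapsto (i+1, f)$ together with the diagonal action of $\Gamma_0$ generate a vertex-transitive subgroup of $\Aut(G)$, and the bundle structure visibly produces exactly two ends.

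The key step is ruling out a regular subgroup. By \Tr{dim Lip}, $G$ admits a non-constant \Lhf\ $h$, essentially unique. For any $\phi \in \Aut(G)$, $h \circ \phi$ is again Lipschitz harmonic, so $h \circ \phi = \eps(\phi)\, h + c(\phi)$ for some $\eps(\phi) \in \{\pm 1\}$ and $c(\phi) \in \R$. This gives a homomorphism $\rho : \Aut(G) \to \R \rtimes \{\pm 1\}$. Since $h$ takes only finitely many values on each fiber, and each automorphism permutes fibers, $\rho(\Aut(G)) \cap \R$ is discrete, and we may rescale $h$ so that it equals $\Z$. The kernel $K$ of $\rho$ is a finite group of fiber-preserving automorphisms, mapping (on each fiber) into $\Gamma_0$.

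Now suppose for contradiction that $\Gamma \leq \Aut(G)$ acts regularly on $V(G)$. The index-at-most-$2$ subgroup $\Gamma^+ := \Gamma \cap \rho^{-1}(\R)$ still acts freely, surjects onto $\Z$ via the shift, and has kernel $\Gamma^+ \cap K$, which must act freely and transitively on each fiber. This forces the existence of a regular subgroup of $\Gamma_0$ on $F$ that is moreover compatible with the matching $M$. The heart of the construction is therefore to arrange $F, H, M, \Gamma_0$ so that (i) $G$ is cubic and vertex-transitive under $\Gamma_0$ and the shift, but (ii) $\Gamma_0$ contains no subgroup acting regularly on $F$ that is compatible with $M$. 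The main obstacle is the interplay between these two requirements: condition (i) forces $\Gamma_0$ to be reasonably large, while (ii) forces it to have no compatible regular subgroup on $F$, in the spirit of finite vertex-transitive non-Cayley graphs such as the Petersen graph. I would finish by a small finite case-analysis, choosing $F$ of modest size (perhaps $|F| \in \{4,6,8\}$) and verifying both conditions directly.
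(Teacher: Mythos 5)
Your proposal is a strategy outline, not a proof: the entire content of \Tr{nonCay} is the \emph{existence} of a suitable graph, and you never exhibit one. You defer the choice of $F$, $H$, $M$, $\Gamma_0$ to ``a small finite case-analysis'' that is not carried out, and it is far from clear that your template admits any valid instance. Indeed, requiring a subgroup $\Gamma_0\leq \mathrm{Sym}(F)$ to act transitively on $F$ \emph{diagonally} while preserving both $H$ and $M$ forces every vertex to have the same in-fiber degree and the same number of up- and down-edges; since up-edges from one fiber are down-edges into the next, the only cubic option is in-fiber degree $1$ with one up- and one down-edge, i.e.\ $H$ is a perfect matching on $F$ and $M$ is a single bijection $F\to F$ centralized by the transitive group $\Gamma_0$. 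That is a very rigid family, and whether it contains a non-Cayley member is exactly the open question you would still need to settle. Notably, the paper's own example does \emph{not} fit your template: its layers are $10$-cycles whose vertices alternate between having an up-edge and a down-edge, so no diagonal group is transitive on a fiber; transitivity there requires an automorphism $\tau$ that reverses the stack of layers. There are also smaller inaccuracies in your reduction: if the regular group $\Gamma$ meets the orientation-preserving part $\rho^{-1}(\R)$ in an index-$2$ subgroup $\Gamma^+$, or if $\Gamma^+$ maps onto $d\Z$ with $d>1$, then $\Gamma^+\cap K$ acts freely but need not be transitive on a fiber, so you only get a free (semiregular) subgroup of $\Gamma_0$ with a bounded number of orbits, not a regular one.

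For comparison, the paper proceeds entirely by hand and does not use \Tr{dim Lip} here at all: it defines an explicit graph on $\Z\times\Z/10\Z$ with $10$-cycle layers joined by a Petersen-like pattern $v_{n,2k+1}v_{n+1,4k+2}$, exhibits generators $\sigma,\tau$ of a transitive group, shows every automorphism preserves the layer partition (the $L_n$ are the only separating $10$-cycles) and is determined by its images of two adjacent vertices in one layer, so that the vertex stabilizer is $\Z/2$, and then checks by a short finite case analysis that no transitive subgroup acts freely, concluding via Sabidussi's theorem. Your idea of using the essentially unique Lipschitz harmonic function to constrain $\Aut(G)$ is attractive and consistent in spirit with \Sr{sec VT}, but as written it does not replace the missing construction.
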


In \Sr{sec open} we ask if every such graph is bi-Cayley, along with further problems motivated from the above results.

The following folklore open problem emerged naturally from the aforementioned result of Shalom \& Tao \cite{ShaTaoFin}:

\begin{problem} \label{prob Lhf}
Does every \lf\ vertex-transitive graph \g admit a non-constant \Lhf?
\end{problem} 

\Prr{prop exist} below answers this in the affirmative in the (easier) case where \g has at least two ends. The difficult case is where \g has intermediate growth.

\section{Preliminaries}

Let $\g=(V,E)$ be a graph, with vertex set $V$ and edge set $E$. The \defi{degree $d(v)$} of a vertex $v\in V$ is the number of edges incident with $v$. We say that \g is \defi{\lf}, if $d(v)<\infty$ \fe\ $v\in V$. All our graphs below are \lf.

A 1-way infinite path is called a \defi{ray}. Two rays in \g are \defi{equivalent}, if no finite set of edges separates them. The corresponding equivalence classes of rays are the \defi{ends} of \G. We say that an end $\epsilon$ \defi{lives} in a set $X\subseteq V$, if for each ray $R$ in $\epsilon$,  all but finitely many vertices of $R$ lie in $X$.

A \defi{cut} of \g is a bipartition $(X,Y)$ of $V$. We sometimes identify this cut with the set \defi{$E(X,Y):=\{xy\in E \mid x\in X, y\in Y\}$} of edges crossing it. It is not hard to prove that for every two ends $\epsilon,\zeta$ of \G, there is a cut $(X,Y)$ \st\ $\epsilon$ {lives} in a $X$, and $\zeta$ {lives} in a $Y$, and $E(X,Y)$ is finite.

A graph can be thought of as a metric space by endowing it with the graph distance $d$. A \defi{quasi-isometry} between graphs $G=(V,E)$ and $H=(V',E')$ is a map $f: V \to V'$ \st\ the following hold for fixed constants $M\geq 1, A\geq 0$:
\begin{enumerate}
\item $M^{-1} d(x,y) -A \leq d(f(x),f(y))\leq M d(x,y) +A $ \fe\ $x,y \in V$, and
\item \fe\ $z\in V'$ \ti\ $x\in V$ \st\ $d(z,f(x))\leq A$.
\end{enumerate}
We say that $G$ and $H$ are \defi{quasi-isometric}, if such a map $f$ exists. 
We will use the following well-known fact:
\begin{proposition}[{\cite[Proposition~2.2]{BriQua}}] \label{prop QI}
Quasi-isometric \lf\ graphs have the same number of ends.
\end{proposition}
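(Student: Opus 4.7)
The plan is to exhibit a bijection $f_*$ between the ends of $G$ and those of $H$, induced by the given $(M,A)$-quasi-isometry $f: V(G) \to V(H)$. Given a ray $R = v_0 v_1 \ldots$ representing an end $\epsilon$ of $G$, the inequality $d_H(f(v_i), f(v_{i+1})) \leq M + A$ lets me connect consecutive images by shortest paths in $H$ to form a walk $W_R$. The lower bound in the quasi-isometry inequality implies that the $f$-preimage of any finite set is bounded in $G$, and hence contains only finitely many vertices of $R$; it follows that $W_R$ visits each finite subset of $V(H)$ only finitely often. Consequently, for each finite $F' \subseteq V(H)$ a tail of $W_R$ lies in a unique infinite component of $H \setminus F'$, and these coherent choices pick out a unique end $f_*(\epsilon)$ of $H$, which one checks is independent of the chosen ray.

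For injectivity, suppose $\epsilon_1 \neq \epsilon_2$ are separated by a cut $(X, Y)$ of $G$ with $E(X, Y)$ finite, and let $F \subseteq V(G)$ denote the finite set of endpoints of the edges crossing this cut. Choose rays $R_1 \subseteq X$ and $R_2 \subseteq Y$ representing $\epsilon_1$ and $\epsilon_2$, fix a quasi-inverse $g: V(H) \to V(G)$ of $f$, and set $F' := N_D(f(F))$ in $V(H)$, for a constant $D$ depending on $M, A$ and the quasi-inverse constants; $F'$ is finite since $H$ is \lf. The claim is that the tails of $W_{R_1}$ and $W_{R_2}$ lie in distinct infinite components of $H \setminus F'$. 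Otherwise, one could join $f(v_i) \in W_{R_1}$ and $f(v'_j) \in W_{R_2}$, with $v_i, v'_j$ chosen so deep in their rays that $d_G(v_i, F), d_G(v'_j, F)$ exceed the quasi-inverse's distortion, by a path $Q \subseteq H \setminus F'$. Applying $g$ to $Q$ and interpolating bounded-length paths between consecutive images produces a walk in $G$ from $v_i \in X \setminus F$ to $v'_j \in Y \setminus F$, which for $D$ chosen large enough avoids $F$ entirely---contradicting the fact that any $X$-to-$Y$ path in $G$ must cross a cut edge and hence visit $F$. Surjectivity follows from the symmetric construction applied to $g$, together with the observation that bounded perturbations of a ray represent the same end.

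The main obstacle is the constant juggling in the injectivity step: $D$ must be chosen larger than the cumulative multiplicative and additive errors incurred by applying $f$ or $g$ and interpolating shortest paths. The underlying combinatorial content is however transparent: a finite vertex separator in $G$ becomes, via $f$-imaging and uniform thickening, a finite vertex separator in $H$ for the corresponding pair of ends.
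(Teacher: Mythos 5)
The paper offers no proof of this proposition---it is imported verbatim from the cited reference---so there is no in-paper argument to compare against; your proof is the standard one and is correct in outline. Two points worth spelling out in a full write-up: first, your justification that $W_R$ leaves every finite set only addresses the vertices $f(v_i)$, not the interpolating geodesic vertices, so you need the extra (easy) observation that a vertex of $W_R$ landing in a finite set $F'$ forces some $f(v_i)$ into the finite set $N_{M+A}(F')$; second, well-definedness of $f_*$ on equivalent rays is a genuine step, not a routine check---it is the forward analogue of your injectivity argument, obtained by pulling a finite $F'\subseteq V(H)$ back to the finite set $f^{-1}\bigl(N_{M+A}(F')\bigr)$ and using that the two rays cannot be separated by any finite set in $G$. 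Neither issue is a gap in the approach, only in the level of detail.
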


Let $\Aut(G)$ denote the group of automorphisms of \G. We say that \g is \defi{vertex-transitive}, if \fe\ $x,y\in V$ \ti\ $z\in \Aut(G)$ \st\ $z(x)=y$. 

\section{Proof of \Tr{dim Lip}}

In this section we prove \Tr{dim Lip}. Let $LH(G)$ denote the real vector space of Lipschitz harmonic functions on a graph \G. We start with the lower bound $\dim(LH(G)) \geq 2$, which we can prove in greater generality as follows:

\begin{proposition} \label{prop exist}
Every connected, \lf\ graph with more than one end admits a non-constant \Lhf.
\end{proposition}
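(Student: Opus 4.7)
The plan is to construct $h$ as a pointwise limit of finite-network potentials, exploiting a finite cut between two ends to force non-constancy.

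Choose two ends $\epsilon,\zeta$ of \G\ and a finite cut $F = E(X,Y)$ with $\epsilon$ living in $X$ and $\zeta$ in $Y$. Fix a base vertex $v_0$ and an exhaustion of $V(\G)$ by balls $B_n$ around $v_0$. For each sufficiently large $n$, build a finite network $H_n$ as follows: contract the $\epsilon$-ray component of $V \setminus B_n$ to a single vertex $a_n$, contract the $\zeta$-ray component to $b_n$, and (for simplicity) absorb any remaining components of $V \setminus B_n$ into $B_n$, so that $F$ remains a cut separating $a_n$ from $b_n$ in $H_n$. Let $h_n$ denote the voltage function of the minimum-energy unit electrical current flow from $a_n$ to $b_n$ in $H_n$ (with unit edge resistances). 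By construction, $h_n$ is harmonic at every vertex of $V(H_n) \setminus \{a_n,b_n\}$, which for large $n$ includes every fixed vertex of \G.

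The key uniform estimate is that $|h_n(u)-h_n(v)| \le 1$ on every edge, i.e.\ $h_n$ is $1$-Lipschitz. This follows from the standard random-walk interpretation, under which the current $h_n(u)-h_n(v)$ equals the expected net number of traversals of $uv$ by simple random walk started at $a_n$ and stopped at $b_n$. A loop-erasure argument shows that every closed sub-walk contributes zero to this net count, so the net count takes values only in $\{-1,0,1\}$ and its expectation lies in $[-1,1]$. After normalising so that $h_n(v_0)=0$, the bound $|h_n(v)| \le d(v_0,v)$ together with a standard diagonal extraction produces a subsequence converging pointwise to a $1$-Lipschitz function $h:V(\G)\to \R$, and the local harmonicity equation at each fixed vertex passes to the limit, so $h$ is harmonic on \G.

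To see that $h$ is non-constant, observe that $F$ separates $a_n$ from $b_n$ in $H_n$, so the sum of signed edge currents across $F$ equals the total current, which is $1$. Hence some edge of $F$ carries absolute current at least $1/|F|$. Since $F$ is finite, by the pigeonhole principle we may fix a single edge $e^\star\in F$ on which this lower bound holds for infinitely many $n$; passing to a sub-subsequence, the voltage drop of $h_n$ across $e^\star$ is at least $1/|F|$ for all surviving $n$, hence the limit $h$ satisfies the same inequality across $e^\star$, and in particular is non-constant.

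The main obstacle is the uniform Lipschitz bound, which is what allows the limit to be taken in the first place. The naive maximum-principle argument only gives $|h_n(u)-h_n(v)| \le R_n^{\mathrm{eff}}(a_n,b_n)$, which diverges when \G\ is recurrent; the random-walk representation of the unit current is what replaces this growing bound by the $n$-independent constant $1$, and thereby lets the construction succeed regardless of the type of \G.
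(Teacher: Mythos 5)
Your overall construction is the same as the paper's: take unit electrical currents between the two ends on an exhaustion by finite networks, use a uniform bound of $1$ on the edge currents to extract a pointwise limit, and use conservation of the unit of flow through a fixed finite cut to see that the limit is non-constant. (The paper works inside $B_n$ with poles $p\in L_n$, $q\in L_{-n}$ rather than contracting the complement of $B_n$, but that is cosmetic.) The substantive difference is how the crucial uniform Lipschitz bound $|h_n(u)-h_n(v)|\le 1$ is justified, and there your argument has a genuine gap.

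The claim that ``every closed sub-walk contributes zero to this net count, so the net count takes values only in $\{-1,0,1\}$'' is false as a pointwise statement about the random walk. A closed walk does \emph{not} make net contribution zero to the signed traversal count of a fixed directed edge: a walk from $a_n$ that goes twice around a cycle containing $uv$, in the same direction, before reaching $b_n$ has net traversal count $2$ on that edge. What vanishes around a cycle is the sum of a potential difference over all its edges (that is KCL), which is a different statement. The correct probabilistic route is that the \emph{expected} net traversal count of the erased loops is zero --- a consequence of reversibility, not of pointwise cancellation --- so the current equals the expected net usage of the edge by the loop-erased, hence self-avoiding, path, which is a difference of two probabilities and therefore lies in $[-1,1]$; but this is a theorem requiring proof, not an observation. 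The paper instead proves the bound deterministically (Proposition~\ref{prop leq1}): for an edge $wv$, consider the cut between $A=\{x \mid h(x)\ge (h(w)+h(v))/2\}$ and its complement; every edge of this cut carries current in the same direction, the total current through it is at most the source intensity $1$, and $wv$ is one of its edges, whence $|f(\ar{wv})|\le 1$. You should either reprove this level-set argument or correctly state and prove the loop-erasure identity; as written, the key estimate is unsupported. A smaller point: if $G$ has three or more ends, ``absorbing the remaining components of $V\setminus B_n$ into $B_n$'' need not produce a finite network, since those components may be infinite; contract them as well (harmonicity is only required at vertices of $G$ eventually interior to $B_n$, so this is harmless).
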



This provides a positive answer to \Prb{prob Lhf} in the multi-ended case. We will construct the desired harmonic function as a limit of  `electrical potentials' along an increasing sequence of finite subgraphs, corresponding to an electrical current of fixed intensity flowing through a cut separating the ends. We thereby exploit the frequently used viewpoint of harmonic functions as electrical potentials. We start by recalling the relevant electrical network theory.

\subsection{Harmonic functions and electrical network basics}

Let $G=(V,E)$ be a (finite or infinite)  graph. Let $\arE$ denote the set of ordered pairs $(x,y)$ with $xy\in E$. We write $\ar{xy}$ to denote $(x,y)$. Note that for every $xy\in E$, both $\ar{xy}$, $\ar{yx}$ ($=:\ra{xy}$) lie in $\arE$.

We say that a function $f:\arE\rightarrow \mathbb{R}$ is \defi{antisymmetric}, if $f(\ar{xy})=-f(\ra{xy})$ for every $xy\in E$. All functions we consider from now on are antisymmetric.

Given $f:\arE\rightarrow \mathbb{R}$, we write \defi{$f^*(x):=\sum_{y \sim x} f(\ar{xy})$} for the net flow of $f$ out of $x$. Here \defi{$y \sim x$} means that $xy\in E$. 

Fix a graph $G=(V,E)$ for the rest of this section.
\begin{definition} \label{def KNL}
We say that an antisymmetric function $f:\arE \to \mathbb{R}$ satisfies \defi{Kirchhoff's Node Law (KNL)} at a vertex $x$, if $f^*(x)=0$ holds. 

We say that $f$ is a \defi{(sourceless) flow} if it satisfies KNL at every $x\in V$. 
\end{definition}

Intuitively, KNL says that current is preserved at $x$.
Given $p\neq q \in V$, we say that $f:\arE \to \mathbb{R}$ is a \defi{$p-q$~flow}, or a flow from $p$ to $q$, if (KNL) holds for every $x\not \in \{p,q\}$, and $f$ is antisymmetric. The \defi{intensity} of $f$ is defined as $f^*(p)$. If \g is finite, then one can easily show that $f^*(q)=-f^*(p)$ by applying KNL to each other vertex. 

\begin{definition}
We say that a flow $f:\arE \to \mathbb{R}$ satisfies \defi{Kirchhoff's Cycle Law (KCL)}, if for every closed walk (equivalently for every cycle) $x_0,x_1,\ldots,x_n(=x_0)$ in \g we have $\sum_{i=0}^{n-1} f(\ar{x_i x_{i+1}})=0$.
\end{definition}

\begin{definition}
An \defi{(electrical) current} of intensity $I\in\mathbb{R}$ from $p$ to $q$ in $G$, is a $p$-$q$ flow $i:\arE \to \mathbb{R}$ satisfying Kirchhoff's Cycle Law, \st\ $i^*(p)=I=-i^*(q)$. 
\end{definition}

We say that a pair of functions $i:\arE \to \mathbb{R}$, $u:V\rightarrow \mathbb{R}$ satisfies \defi{Ohm's Law (OL)},  if $i(\ar{xy})=u(x)-u(y)$ holds \fe\ $x,y\in V$.

Note that if $i,u$ satisfies (OL), then so does $i,(u+a)$ for every constant $a\in\mathbb{R}$. We say that $i$ is the \defi{Ohm dual} of $u$ and $u$ is an Ohm dual of $i$, if (OL) holds.

\begin{definition} \label{def harm}
We say that a function $u: V \to \R$ is \defi{harmonic}, if 
$u(x) =\sum_{y\sim x} u(y)/d(x)$ holds \fe\ $x\in V$. 
\end{definition}

\begin{proposition} \label{prop knl}
If the pair $i,u$ satisfies Ohm's Law, then $u$ is harmonic at $x\in V$ if and only if $i$ satisfies Kirchhoff's Node Law at $x$, i.e.\ if $i^*(x)=0$.
\end{proposition}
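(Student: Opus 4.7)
The plan is to perform a direct one-line computation using the definitions. Starting from the definition $i^*(x) = \sum_{y \sim x} i(\vec{xy})$ and substituting Ohm's Law $i(\vec{xy}) = u(x) - u(y)$, the sum splits as
\[
i^*(x) = \sum_{y \sim x} \bigl( u(x) - u(y) \bigr) = d(x)\, u(x) - \sum_{y \sim x} u(y),
\]
since $x$ has $d(x)$ neighbours. Dividing by $d(x)$, the identity $i^*(x) = 0$ is equivalent to $u(x) = \frac{1}{d(x)} \sum_{y \sim x} u(y)$, which is precisely the harmonicity condition at $x$ from \Dr{def harm}. This gives the equivalence in both directions simultaneously.

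There is no real obstacle here; the statement is essentially tautological once the definitions are written out. The only care needed is to use the local finiteness assumption to know that the sum $\sum_{y \sim x} i(\vec{xy})$ has only finitely many terms and hence the rearrangement is valid, but this is guaranteed since all graphs in the paper are locally finite. So the proof amounts to a two-sentence derivation.
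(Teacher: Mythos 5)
Your proof is correct and follows essentially the same computation as the paper: substitute Ohm's Law into the sum $\sum_{y\sim x} i(\vec{xy})$ (equivalently, into $\sum_{y\sim x}u(y)$) to get $i^*(x)=d(x)u(x)-\sum_{y\sim x}u(y)$ and read off the equivalence. Nothing is missing.
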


This is well-known, but we include the proof for convenience as it is quite short:
\begin{proof}
Since $i,u$ satisfy (OL), we have 
$$\sum_{y\sim x} u(y)=\sum_{y\sim x} \big( u(x)-i(\ar{xy}) \big)=d(x)u(x)-\sum_{y \sim x} i(\ar{xy}),$$
which is by definition equal to $d(x)u(x)-i^*(x)$. Thus $i^*(x)=0$ if and only if $\sum_{y \sim x} u(y)/d(x)=u(x)$, which is the definition of being harmonic.
\end{proof}

\begin{proposition}[{The maximum/minimum principle \cite[Lemma 3.2]{ma3h2}}] \label{max prin}
Suppose \g is finite and connected, and $u:V \to \R$ is harmonic at every vertex of $V'\subset V$. If the maximum or minimum of $u$ is achieved at some vertex of $V'$, then $u$ is constant.
\end{proposition}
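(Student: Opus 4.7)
The plan is to carry out the standard discrete maximum principle argument, reducing the minimum case to the maximum case by the observation that $-u$ is harmonic at exactly the vertices where $u$ is, and attains its maximum precisely where $u$ attains its minimum. So it suffices to treat the maximum.

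Set $M := \max_{v \in V} u(v)$ and $S := u^{-1}(M) = \{v \in V : u(v) = M\}$. By the hypothesis on where the maximum is achieved, $S \cap V' \neq \emptyset$. The key observation, and the only place harmonicity is used, is the following. For any $v \in S \cap V'$, \Dr{def harm} gives
\[
M = u(v) = \frac{1}{d(v)} \sum_{w \sim v} u(w).
\]
Since each $u(w) \leq M$, equality in the average forces $u(w) = M$ for every neighbour $w$ of $v$; in other words, every neighbour of a vertex in $S \cap V'$ already lies in $S$.

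From here I would propagate: reading the statement with $V' = V$, one iterates the above to conclude that $S$ is closed under taking neighbours in $G$. Since $S \neq \emptyset$ and $G$ is finite and connected, a straightforward induction on graph distance from any fixed vertex in $S \cap V'$ then yields $S = V$, so that $u \equiv M$ is constant. The minimum case follows by applying the result to $-u$.

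The only real subtlety — and the main potential obstacle — is notational: the propagation step requires the averaging identity to be available at every vertex we encounter along a path from a max-attaining vertex to an arbitrary vertex of $G$, which is fine provided the harmonicity hypothesis holds on all of $V$. With that reading of $V' \subset V$, the proof is essentially the two lines above, and no further ingredient (beyond $G$ being finite and connected, so that distances are well-defined and finite) is needed.
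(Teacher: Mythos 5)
The paper does not actually prove this proposition --- it is quoted from the cited lecture notes --- so there is no in-paper argument to compare yours against; what you give is the standard discrete maximum-principle argument, and its core is correct. The local step is exactly right: if $v\in V'$ attains the maximum $M$, then harmonicity at $v$ together with $u(w)\le M$ for all $w\sim v$ forces $u(w)=M$ for every neighbour $w$ of $v$. The reduction of the minimum case to the maximum case via $-u$ is also fine.

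However, the difficulty you flag in the propagation step is a genuine gap, not a notational one, and it cannot be repaired for the statement as literally written: if $V'$ is a proper subset of $V$, the proposition is false. Take $G$ to be the path on vertices $a,b,c,d$ (in this order), let $u(a)=u(b)=u(c)=1$ and $u(d)=0$, and let $V'=\{b\}$. Then $u$ is harmonic at every vertex of $V'$ and attains its maximum at $b\in V'$, yet $u$ is not constant. Your proof is therefore complete only under the reading $V'=V$, which you adopt explicitly. This is harmless for the paper: the only place \Prr{max prin} is invoked is the proof of \Lr{lem const}, where $V'$ is the set of interior vertices of the finite graph $H_i$ and all that is really needed is the correct ``boundary'' form of the principle, namely that $\min_{V}u=\min_{V\setminus V'}u$ whenever $V'\ne V$ and $G$ is connected. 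That form does follow from your local step: walk from a minimising vertex along a path towards $V\setminus V'$ (such a path exists by connectedness), propagate the minimum at each vertex of $V'$ you pass through, and stop at the first vertex outside $V'$. So the right fix is to prove the two correct statements --- the $V'=V$ case, which you have done, and the boundary form --- rather than the literal one.
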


Note that every function $u:V \to \R$ has a (unique) Ohm dual \defi{$\partial u$} defined by $\partial u(\ar{xy})\mapsto u(x)-u(y)$. For the converse we have the following well-known fact: \begin{proposition}[{\cite[Proposition 3.7.]{ma3h2}}]  \label{OD}
Let $G$ be a connected graph, and $i: \arE \to \mathbb{R}$ antisymetric. Then, there is $u: V\rightarrow \mathbb{R}$ such that the pair $i,u$ satisfies OL if and only if $i$ satisfies KCL. This $u$ is unique up to an additive constant.
\end{proposition}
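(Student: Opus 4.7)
The statement is the graph-theoretic analogue of ``a $1$-form is exact iff it is closed'', so the strategy is the standard one: define $u$ by integrating $i$ along paths from a fixed basepoint, and use KCL to check that this is independent of the path chosen.

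The easy direction is that OL implies KCL: if $i(\ar{xy})=u(x)-u(y)$, then for any closed walk $x_0,x_1,\dots,x_n=x_0$ the sum $\sum_{j=0}^{n-1} i(\ar{x_j x_{j+1}})$ telescopes to $u(x_0)-u(x_n)=0$. Uniqueness up to an additive constant is equally quick: if $i,u$ and $i,u'$ both satisfy OL, then $u-u'$ takes the same value at the two endpoints of every edge, so by connectedness $u-u'$ is constant on $V$.

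For the substantive direction, assume $i$ satisfies KCL. Fix a basepoint $x_0\in V$ and set $u(x_0):=0$. For an arbitrary $x\in V$, choose a walk $x_0=v_0,v_1,\dots,v_k=x$ in \G (which exists because \g is connected) and define
\[
u(x):=-\sum_{j=0}^{k-1} i(\ar{v_j v_{j+1}}).
\]
The main step is to check well-definedness: if $v_0,\dots,v_k$ and $w_0,\dots,w_m$ are two such walks from $x_0$ to $x$, their concatenation (reversing the second) is a closed walk, and KCL together with antisymmetry of $i$ forces the two sums to coincide. Once $u$ is well-defined, OL follows by computing $u$ along any walk ending with the edge $xy$: appending that edge adds $-i(\ar{xy})$ to the sum, giving $u(y)=u(x)-i(\ar{xy})$, i.e.\ $i(\ar{xy})=u(x)-u(y)$.

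The only nontrivial point is the well-definedness step. The definition of KCL given in the paper is in terms of closed walks (``equivalently cycles''), which is exactly what is needed to compare two walks with common endpoints; so no extra work is required to upgrade from cycles to closed walks. Everything else is telescoping or an immediate use of connectedness.
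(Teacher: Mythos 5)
The paper does not prove this proposition; it is quoted from the cited lecture notes \cite{ma3h2} as a well-known fact. Your argument --- telescoping for OL $\Rightarrow$ KCL, path-integration from a basepoint with well-definedness via KCL on the concatenated closed walk for the converse, and connectedness for uniqueness --- is the standard proof and is correct, including the observation that the paper's definition of KCL is already phrased for closed walks, so no cycle-to-walk upgrade is needed.
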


Perhaps better-known among electrical engineers, is the fact that no branch of an electrical network \g can enhance the current flowing into $G$ from the external source: 
\begin{proposition} \label{prop leq1}
Let $G=(V,E)$ be a finite graph, and $p,q\in V$. Let $f:\arE\to \R$ be the $p$--$q$~electrical current of intensity 1 in \G. Then $|f(\ar{wv})|\leq 1$ holds \fe\ $\ar{wv} \in \arE$.
\end{proposition}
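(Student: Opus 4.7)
The plan is to decompose $f$ as a nonnegative sum of unit flows along directed $p$--$q$ paths whose coefficients total $f^*(p) = 1$; such a decomposition immediately forces $|f(\are)| \leq 1$ on every arc, since then $f(\are) = \sum_{i : \are \in P_i} c_i \leq \sum_i c_i = 1$ for $\are$ in the support, and the opposite bound follows by antisymmetry.

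To obtain the decomposition, first use \Prr{OD} to take an Ohm dual $u: V \to \R$ of $f$; by \Prr{prop knl}, $u$ is harmonic at every vertex of $V \setminus \{p,q\}$. Since $f^*(p) = 1$ the function $u$ is non-constant, so the maximum principle \Prr{max prin} forces its extrema to occur at $p$ and $q$, and after replacing $f$ by $-f$ if necessary we may assume $u(p) > u(q)$. Now define the directed subgraph $D := \{\are \in \arE : f(\are) > 0\}$. Ohm's Law tells us that $u$ strictly decreases along every arc of $D$, so $D$ is acyclic; combined with KNL at each $x \notin \{p,q\}$ and the antisymmetry of $f$, this shows that $p$ is the unique source of $D$ and $q$ the unique sink (every internal vertex of $D$ has both an in-arc and an out-arc).

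A standard greedy argument on the finite DAG $D$ then yields the decomposition: pick a directed $p$--$q$ path $P$ (which exists since one can start at $p$ and, by conservation, keep following out-arcs until forced to terminate at $q$), peel off $\min_{\are \in P} f(\are) \cdot \chi_P$ from $f$, and iterate until the support is empty. The resulting positive coefficients $c_i$ satisfy $\sum_i c_i = f^*(p) = 1$, as seen by summing KNL at $p$ against the decomposition.

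I do not foresee any substantial obstacle: the proof chains together standard facts about harmonic functions, Ohm's Law, and flow decomposition on a DAG. The only point requiring mild care is correctly invoking antisymmetry to ensure that $D$ contains no antiparallel arcs, so that the greedy peel-off is well-defined and strictly decreases the size of the support at each step, guaranteeing termination.
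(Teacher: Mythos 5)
Your proof is correct, but it takes a genuinely different route from the paper's. You prove the bound via the classical flow-decomposition theorem: using the Ohm dual $u$ and the maximum principle you show that the set $D$ of positively oriented arcs is a finite acyclic digraph whose unique source is $p$ and whose unique sink is $q$, you greedily peel off directed $p$--$q$ path flows, and you conclude that $f$ is a nonnegative combination of unit path flows with coefficients summing to the intensity $1$, whence $|f(\ar{wv})|\leq 1$ on every arc. The paper instead fixes the edge $wv$ and considers the single level cut $A:=\{x\in V \mid h(x)\geq (h(w)+h(v))/2\}$ determined by the Ohm dual $h$: by Ohm's Law every arc crossing from $A$ to $V\sm A$ carries strictly positive flow, and summing $f^*$ over $A$ shows that the total flow across this cut is at most $1$; since $wv$ is one of the crossing edges, $|f(\ar{wv})|\leq 1$ follows. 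The paper's argument is shorter and needs neither the maximum principle nor the acyclicity/termination analysis of the greedy peeling, while yours is heavier but delivers the stronger structural fact that the unit current decomposes into $p$--$q$ path flows. Two minor remarks on your write-up: replacing $f$ by $-f$ is unnecessary (and technically changes the intensity to $-1$) --- since $f^*(p)=1>0$, the vertex $p$ cannot be a minimum of $u$, so the maximum principle already forces $u(p)=\max u$ and $u(q)=\min u$; and it is precisely this fact (not just KNL at internal vertices) that guarantees $p$ has no in-arcs and $q$ no out-arcs in $D$, which you need both for the existence of the $p$--$q$ paths at every stage and for the identity $\sum_i c_i = f^*(p)$.
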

\begin{proof}
Let $h$ be an Ohm dual of $f$ as provided by \Prr{OD}. Given an edge $wv\in E$, let 
$$A:= \{x\in V \mid h(x) \geq \frac{h(w) + h(v)}{2}\}$$
and $Z:= V \sm A$. Thus the cut $(A,Z)$ separates $w,v$ whenever $f(\ar{wv})\neq 0$, which we may assume is the case. Note that 
\labtequ{fxz}{$f(\ar{xz})>0$ holds \fe\ $x\in A, z\in Z$. }
Moreover, we have $\sum_{x\in A} f^*(x) \leq 1$ since $f$ is a  current of unit intensity. By double counting, we have $\sum_{x\in A} f^*(x) = \sum_{x\in A, z\in Z} f(\ar{xz}) \geq |f(\ar{wv})|$, where the last inequality follows from \eqref{fxz} and the fact that $(A,Z)$ separates $w,v$. Combining these two inequalities we obtain the desired $|f(\ar{wv})| \leq 1$.
\end{proof}

\subsection{Existence of \Lhf s}

We have now gathered the necessary ingredients to prove our existence result:
\begin{proof}[Proof of \Prr{prop exist}]
Assume first that \g has exactly two ends $\epsilon,\zeta$; the general case is similar. 

Fix an \defi{origin} $o\in V(G)$, and consider the \defi{spheres}  $S_n:=\{v\in V(G) \mid d(o,v)=n\}$. The \defi{ball} $B_n$ is the subgraph of \g induced by the vertex set $\{v\in V(G) \mid d(o,v)\leq n\}$. Define, for each $\nin$, the \defi{layer} $L_n$ (respectively $L_{-n}$) 
as the subset of $S_n$ contained in the unique component of $G - B_{n-1}$ containing a ray of $\epsilon$ (resp.\ $\zeta$). Note that $L_n$ coincides with $L_{-n}$ as long as $B_{n-1}$ does not separate the two ends, but $L_n, L_{-n}$ are disjoint for $n>n_0:= \min \{k \mid B_k \text{ separates } \epsilon,\zeta \}$.

For each $n> n_0$, we define a flow $f_n$ in $B_n$ as follows. Pick vertices $p\in L_n, q\in L_{-n}$, and let $f_n$ be the electrical current of intensity 1 from $p$ to $q$ in  $B_n$. 

Moreover, for every cut $C=E(X,Y)$ of $B_n$ separating $L_n$ from $L_{-n}$, the net flow of $f_n$ through $C$ equals 1 because the net flow of $f_n$ out of $L_n$ is 1, and every vertex outside $L_n \cup L_{-n}$ preserves the net flow. Fix such a cut $C$ in $B_{n_0+1}$, and note that $C$ separates $\epsilon$ from $\zeta$ in \G, and therefore it  separates $L_i$ from $L_{-i}$ \fe\ $i>n_0$. 

Let $f: \arE(G) \to \R$ be a pointwise accumulation point of the sequence \seq{f}, which exists since the space $[-1,1]^{\arE(G)}$ in which our functions live is compact by Tychonoff's theorem. 

Then $f$ is not identically 0, because its net flow through the finite cut $C$ equals 1. Moreover, $f$ satisfies \kcl\ for every cycle $K$ of \G, since every $f_n$ does. Thus we can define the Ohm dual  $h$ of $f$ satisfying $h(o)=0$ by \Prr{OD}.  Finally, $f$ satisfies \knl\ at every vertex $v\in V(G)$, since almost every $f_n$ does. Therefore $h$ is a harmonic function by \Prr{prop knl}. It is non-constant since  $f$ is not identically 0. By \Prr{prop leq1}, we have $|f_n(\are)|\leq 1$ \fe\ $\are \in \arE$, and hence $|f(\are)|\leq 1$, which means that $h$ is Lipschitz.

\medskip
If \g has more than two ends, we can repeat the construction using an appropriate bipartition of the ends as follows. We let  $C$ be a minimal cut separating two infinite components $K_1,K_2$. Let $n_0:= \min \{k \mid C\subset B_k \}$. 
For  $n> n_0$, define  $L_n:= S_n \cap K_1$ and $L_{-n}:= S_n \cap K_2$. The rest of the construction of $h$ remains the same.
\end{proof}

\subsection{Upper bounding $\dim(LH(G))$} \label{sec UB}

Our proof of \Tr{dim Lip} will come as a combination of the above and the following lemmas. Given a cut $(X,Y)$ of a graph \G, and a flow $f: \arE(G) \to \R$, we define \defi{$f(X,Y):= \sum_{x\in X, y\in Y} f(\ar{xy})$}.

\begin{lemma} \label{lem cuts}
Let $G=(V,E)$ be a connected, 2-ended graph.
For every sourceless flow  $f:\arE\to \R$, and every two finite cuts $(X,Y)$ and $(X',Y')$ separating the ends, we have 
$f(X,Y)=\sigma f(X',Y')$, where $\sigma=1$ if $X,X'$ contain the same end, and $\sigma=-1$ otherwise. 
\end{lemma}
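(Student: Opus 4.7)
The plan is to reduce the identity to a direct algebraic computation after simultaneously refining both cuts. First I would dispatch the case $\sigma = -1$ by swapping $X'$ and $Y'$, which flips the sign of $f(X',Y')$ and reduces to the case $\sigma = 1$. So I may assume the end $\epsilon$ lives in both $X$ and $X'$, and $\zeta$ lives in both $Y$ and $Y'$. Then I partition $V$ into the four cells
\[
A := X \cap X', \qquad B := X \cap Y', \qquad C := Y \cap X', \qquad D := Y \cap Y'.
\]
Any edge joining two different cells must lie in $E(X,Y) \cup E(X',Y')$, which is finite, so each cell has a finite edge-boundary.

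The key (and only nontrivial) step is to show that the two ``mixed'' cells $B$ and $C$ are finite. I would prove this by contradiction: if $B$ were infinite, then since $B$ has finite coboundary and $G$ is locally finite, $G[B]$ would contain an infinite connected component, which in turn contains a ray $R$ (a standard König-style argument). Now $R$ belongs to one of the two ends of $G$. If $R \in \epsilon$, then $R$ must eventually lie in $X'$ because $\epsilon$ lives in $X'$; but $R \subseteq B \subseteq Y'$, contradiction. If $R \in \zeta$, then $R$ must eventually lie in $Y$ because $\zeta$ lives in $Y$; but $R \subseteq B \subseteq X$, again a contradiction. Hence $B$ is finite, and the same argument with the roles of the cuts and ends interchanged shows $C$ is finite. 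This is the place where both 2-endedness and local finiteness are used, and I expect it to be the main obstacle.

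Once $B$ and $C$ are finite, I can apply KNL vertex-by-vertex and sum. Writing
\[
p = f(A,B),\ q = f(A,C),\ r = f(A,D),\ s = f(B,C),\ t = f(B,D),\ u = f(C,D),
\]
summing $f^*(v) = 0$ over $v \in B$ gives $-p + s + t = 0$, and summing over $v \in C$ gives $-q - s + u = 0$, i.e.\ $p = s+t$ and $u = q+s$. Meanwhile the definition directly unfolds to
\[
f(X,Y) = q + r + s + t, \qquad f(X',Y') = p + r - s + u
\]
(using antisymmetry on the $BC$-contribution in the second). Substituting the two KNL relations into the second expression yields $f(X',Y') = (s+t) + r - s + (q+s) = q + r + s + t = f(X,Y)$, as required.
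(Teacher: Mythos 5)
Your proof is correct and follows essentially the same route as the paper's: both arguments hinge on showing that the symmetric difference $X\triangle X'$ (your $B\cup C$) is finite and then summing Kirchhoff's node law over it, the paper organising this as an induction that moves one vertex at a time while you do the equivalent four-cell algebra in one step. Your ray-based justification that $B$ and $C$ are finite is in fact spelled out more carefully than the paper's one-line version of the same claim.
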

\begin{proof}
Let $\eps$ be the end of \g that lives in $X$, and $\zeta$ the end that lives in $Y$. We claim that if $\eps$ lives in $X'$, then the symmetric difference $F= X \sydi X' := (X \sm X') \cup (X' \sm X)$ is finite. For if not, then one of $X,X'$ contains an infinite sequence $\{v_i\}$ of vertices not contained in the other, and as $v_i$ must converge to $\eps$ we have a contradiction. 

Next, we claim that $f(X',Y') = f(X,Y) + \sum_{v\in F} f^*(v)$. This is easy to check by induction on the size of $F$, because the effect on $f(X,Y)$ of moving a vertex $v$ from $Y$ to $X$ is exactly the addition of $f^*(v)$. Since $f^*(v)=0$ \fe\ $v$ as $f$ is a sourceless flow, we deduce $f(X,Y)= f(X',Y')$. 

If $\eps$ lives in $Y'$ instead, then we repeat the same arguments with the roles of $X'$ and $Y'$ interchanged, to deduce that $f(X,Y)= f(Y',X') = - f(X',Y')$.
\end{proof}

\begin{lemma} \label{lem const}
Let $G=(V,E)$ be a \lf, recurrent graph which is quasi-isometric to $\R$. Let $h$ be a \Lhf\ on \G, and suppose that for some finite cut $E(X,Y)$ separating the two ends of \g we have $\partial h(X,Y)= 0$. Then $h$ is constant. 
\end{lemma}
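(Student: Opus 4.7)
Set $i := \partial h$. Since $h$ is harmonic, \Prr{prop knl} gives that $i$ is sourceless; the Lipschitz bound yields $|i(e)| \leq C$ on every edge $e$. The hypothesis $\partial h(X,Y) = 0$ together with \Lr{lem cuts} then propagates to $i(X',Y') = 0$ for \emph{every} finite cut $(X',Y')$ separating the two ends of $G$. My plan is to deduce that $h$ has finite Dirichlet energy $\mathcal{D}(h) := \sum_{e \in E} i(e)^2 < \infty$; a classical fact from potential theory on networks (going back to Yamasaki/Soardi) then forces every harmonic function of finite Dirichlet energy on a recurrent graph to be constant, which completes the proof.

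To bound $\mathcal{D}(h)$, I would exploit that $G$ is quasi-isometric to $\R$ and hence $2$-ended (\Prr{prop QI}) to build a nested exhaustion $B_n \uparrow V$ by finite sets whose edge boundary splits as $C_n^+ \sqcup C_n^-$, each $C_n^\pm$ being itself a finite cut separating the two ends. Standard end-theoretic arguments for $2$-ended locally finite graphs give a nested sequence of such cuts of uniformly bounded cardinality; meanwhile the quasi-isometry ensures the endpoints of the edges in $C_n^\pm$ lie in sets of uniformly bounded diameter on each side. By \Lr{lem cuts}, the net flow of $i$ across each of $C_n^+$ and $C_n^-$ is zero.

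A discrete integration by parts using the sourcelessness of $i$ then gives
$$\sum_{e \in E(B_n)} i(e)^2 = -\sum_{\substack{\ar{vw} \in \arE \\ v \in B_n,\; w \notin B_n}} h(v)\, i(\ar{vw}).$$
I split this into the $C_n^+$ and $C_n^-$ contributions. Since each has total $i$-flow zero, one may subtract $h(v_\star^n)$ for any reference vertex $v_\star^n$ in the corresponding boundary slab; the Lipschitz property together with the uniform diameter bound controls $|h(v) - h(v_\star^n)|$ by a constant independent of $n$, which combined with $|i| \leq C$ and the uniform bound on $|C_n^\pm|$ yields $\sum_{e \in E(B_n)} i(e)^2 \leq K$ for some $K$ independent of $n$. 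Taking $n \to \infty$ gives $\mathcal{D}(h) < \infty$, and the cited recurrence-plus-Dirichlet fact finishes the argument. The main obstacle is securing the exhaustion with uniformly bounded boundary cut: this is exactly where $2$-endedness (rather than mere local finiteness) enters, and it is essential for turning the local hypothesis $\partial h(X,Y)=0$ into the global conclusion $\mathcal D(h)<\infty$.
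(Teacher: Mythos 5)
Your overall strategy -- show $\mathcal D(h):=\sum_e \partial h(e)^2<\infty$ and invoke the classical fact that a recurrent network carries no non-constant harmonic function of finite Dirichlet energy -- is coherent, and both the summation-by-parts identity and the trick of subtracting a reference value $h(v_\star^n)$ (legitimate because \Lr{lem cuts} makes the net flux across each $C_n^\pm$ vanish) are correct. The gap is the claim that a $2$-ended, \lf\ graph quasi-isometric to $\R$ admits an exhaustion whose boundary cuts have \emph{uniformly bounded cardinality}. The quasi-isometry does give boundary slabs of uniformly bounded \emph{diameter}, but bounded diameter implies bounded cardinality only under a bounded-degree hypothesis, which the lemma does not assume. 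Concretely, let $G$ be obtained from the $2$-way infinite path $\dots u_{-1}u_0u_1\dots$ by replacing each edge $u_nu_{n+1}$ with $|n|+1$ internally disjoint paths of length $2$. This $G$ is \lf, quasi-isometric to $\R$, and recurrent (the effective resistance to infinity is $\sum_n 2/(|n|+1)=\infty$), yet every finite cut separating the ends whose edges avoid $B_n$ has at least roughly $n/2$ edges, so $|C_n^\pm|\to\infty$ along any exhaustion. Your final estimate bounds the boundary term by (Lipschitz constant)$\times$(diameter)$\times\sum_{e\in C_n^+}|\partial h(e)|$, and with only $|\partial h|\le C$ and unbounded $|C_n^\pm|$ this does not stay bounded. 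Worse, on this very graph the non-constant Lipschitz harmonic function of \Prr{prop exist} has \emph{infinite} Dirichlet energy (the unit current contributes energy $2/(n+1)$ at level $n$, and these sum to $\infty$ -- as they must, since a finite-energy non-zero flow to infinity would contradict recurrence). So finiteness of $\mathcal D(h)$ genuinely hinges on the zero-flux hypothesis, and your mechanism for extracting it collapses exactly in the unbounded-degree regime that the recurrence hypothesis is meant to cover.

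Under an added bounded-degree assumption your argument does go through and would be a valid alternative proof (and in that case recurrence is automatic from the quasi-isometry to $\R$, as remarked after \Tr{dim Lip}). For the lemma as stated, however, you need a route that does not count boundary edges. The paper's proof uses a different classical consequence of recurrence -- the absence of non-constant \emph{positive} harmonic functions -- to conclude that a non-constant $h$ is unbounded above and below, tends to $+\infty$ towards one end and $-\infty$ towards the other (this is where the quasi-isometry and the maximum principle enter, via separators of bounded diameter), and then applies \Lr{lem cuts} to the finite, non-empty cut between $P=\{h\ge 0\}$ and $N=\{h<0\}$, across which every edge has strictly positive $\partial h$, contradicting $\partial h(P,N)=0$. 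You should either add bounded degree to your hypotheses or replace the energy argument with one of this type.
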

\begin{proof}
By \Prr{prop QI}, $G$ has exactly two ends $\epsilon,\zeta$.

It is well-known that every recurrent graph has no non-constant positive harmonic functions \cite[Theorem~(1.16)]{woessBook}\footnote{The analogous statement for Riemannian manifolds is also true \cite[Theorem 5.1]{GriAna}.}. It follows from this that if $h$ is non-constant, then it is unbounded both above and below. Thus \ti\ a sequence \seq{v}\ of vertices \st\ $h(v_n) \to \infty$. Some subsequence of \seq{v}\ converges to one of the ends, and so we may assume \obda\ that $v_n \to \epsilon$. 

Next, we claim that 
\labtequ{eps}{for every sequence \seq{w}\ of vertices of \g that converges to $\epsilon$, we have $h(w_n) \to \infty$. }
To see this, given $v\in V$, let $S_v \subset G$ be a connected subgraph containing $v$ and separating $\epsilon$ from $\zeta$. Using a quasi-isometry $q: G \to \R$, it is not hard to choose the $S_v$ to have diameters $\diam(S_v)<M$ for a constant $M$ depending on $q$ but not on $v$. 

Since $h$ is Lipschitz, it follows that if $x,y\in S_v$, then $|h(x)-h(y)|<M' \in \R$. Thus 
\labtequ{hS}{$h[S_{v_n}] \to \infty$ for the above sequence \seq{v}.}

Next, let $H'_i$ denote the union of the finite components of $G - (S_{v_i} \cup S_{v_{i+1}})$, and let $H_i:= H'_i \cup S_{v_i} \cup S_{v_{i+1}}$. Notice that $H_i$ is a finite graph. Thus \eqref{hS}, combined with the maximum principle (\Prr{max prin}), implies that  $h[H_n] \to \infty$. Since $\seq{v},\seq{w}$ both converge to $\epsilon$, almost every $w_n$ lies in some $H_{i(n)}$ with $i(n) \to \infty$ as $n\to  \infty$. The last two remarks combined imply $h(w_n)\to \infty$ as claimed.

\medskip
Since $h$ is also not bounded below, we obtain a sequence  \seq{z}\ of vertices \st\ $h(z_n) \to -\infty$. Since no subsequence of \seq{z}\ can converge to $\epsilon$ by \eqref{eps}, it follows that $z_n \to \zeta$. By repeating the proof of  \eqref{eps} we deduce that $h(y_n) \to -\infty$ holds whenever $y_n \to \zeta$. 

Let $P:= \{x\in V(G) \mid h(x)\geq 0\}$, and $N:= V\sm P= \{x\in V(G) \mid h(x) < 0\}$. Note that the cut $E(P,N)$ is non-empty. We claim that $E(P,N)$ consists of finitely many edges. For if ${x_n}$ is a sequence of distinct end-vertices of edges in $E(P,N)$ lying in $P$, then some subsequence will converge to $\epsilon$ or $\zeta$. We would then have $|h(x_n)| \to \infty$ by \eqref{eps} and its analogue for $\zeta$. But this is impossible: since $h$ is Lipschitz, and $x_n$ has a neighbour in $N$, the values $h(x_n)$ must be uniformly bounded.

Recall that we are assuming $\partial h(X,Y)= 0$, and therefore \Lr{lem cuts} implies $\partial h(P,N)= 0$. This is impossible, because $E(P,N)$ is non-empty, and $\partial h(x,y)>0$ holds \fe\ $xy\in E(P,N)$ with $x\in P, y\in N$. 
This contradiction proves that $h$ is constant as claimed.
\end{proof}

\subsection{Completing the proof of \Tr{dim Lip}}
Our main result now follows as a direct combination of \Prr{prop exist} and \Lr{lem const}: 

\begin{proof}[Proof of \Tr{dim Lip}]
By \Prr{prop QI}, $G$ has exactly two ends $\epsilon,\zeta$.
To see that $\dim(LH(G)) \geq 2$, let $f_1 \equiv 1$, and let $f_2$ be a non-constant \Lhf\ of \g as provided by \Prr{prop exist}. Then $f_1,f_2$ clearly span a 2-dimensional subspace of $LH(G)$. 
\medskip

For the converse inequality, fix a finite cut $E(X,Y)$ of \g separating $\epsilon,\zeta$. We claim that if two functions $h_1,h_2\in LH(G)$ satisfy $\partial h_1(X,Y)=\partial h_2(X,Y)$ and $h_1(o)=h_2(o)$ for some $o\in V(G)$, then $h_1 = h_2$ holds. This clearly implies that $\dim(LH(G)) \leq 2$.

To prove this claim, let us consider the difference $h:= h_1 - h_2 \in LH(G)$. Notice that $\partial h(X,Y)= 0$. Thus \Lr{lem const} says that $h$ is constant, and since $h_1(o)=h_2(o)$ we deduce that $h \equiv 0$ and so $h_1=h_2$.
\end{proof}

Let us explore the tightness of the conditions of the results of this section.

\Prr{prop exist} fails if \g is 1-ended, as can be easily checked by letting \g be a 1-way infinite path. 

\Tr{dim Lip} (and \Lr{lem const}) becomes false if we drop the recurrence condition. To see this, start with the 2-way infinite ladder $L$, i.e.\ the graph obtained from the disjoint union of two 2-way infinite paths $H_1,H_2$ by joining each vertex of $H_1$ to the corresponding vertex of  $H_2$ with an edge.  We call the edges in each $H_i$ \defi{horizontal}, and the remaining edges \defi{vertical}. Obtain \g by replacing each horizontal edge of $L$ at distance $n$ from the origin by $2^n$ paths of length 2. The interested reader will be able to construct a bounded harmonic function $h$ that converges to four different values as we move along the four horizontal rays. One way of doing so is by noticing that simple random walk on \g will almost surely traverse only finitely many vertical edges, and defining $h(v)$ as the probability for random walk starting at $v$ to eventually stick to a fixed horizontal ray.

\Tr{dim Lip} and \Lr{lem const} easily become false if we drop the quasi-isometry condition, even if we assume 2-endedness: let $H$ be an 1-ended graph that admits non-constant bounded harmonic functions, and connect two copies of $H$ by one edge.

If we drop  the Lipschitz condition in \Lr{lem const} then we do get non-constant harmonic functions, but the only examples I know grow exponentially in the distance from a fixed origin. I suspect this cannot be improved:

\begin{conjecture}
Let $G=(V,E)$ be a \lf, recurrent graph which is quasi-isometric to $\R$. Let $h$ be a harmonic function on \G, and suppose that for some finite cut $E(X,Y)$ separating the two ends of \g we have $\partial h(X,Y)= 0$. Then $h$ is either constant or grows exponentially, i.e.\ there is a constant $c>1$ and a sequence of vertices \seq{v}\ \st\ $|h(v_n)-h(v_0)|>c^{d(v_n,v_0)}$. 
\end{conjecture}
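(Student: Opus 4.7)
The plan is to analyse $h$ via a transfer-matrix formalism enabled by the bounded ``width'' that the quasi-isometry to $\R$ imposes on \g. First, I would fix a quasi-isometry $q\colon G \to \R$ and partition $V(G)$ into layers $L_n := q^{-1}([n,n+1))$. Local finiteness together with the quasi-isometry constants gives a uniform bound $|L_n|\le K$ and a uniform bound on the $q$-displacement of any edge, so after grouping consecutive layers into slabs we may assume every edge lies within some $L_n$ or between $L_n$ and $L_{n+1}$. The finite cuts $C_n := E(L_{\le n}, L_{>n})$ separate the two ends of \g, so by \Lr{lem cuts} applied to the sourceless flow $\partial h$ we have $\partial h(C_n) = 0$ for every $n$.

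Second, I would set up transfer dynamics. Viewing the state $s_n := (h|_{L_n}, h|_{L_{n+1}})$ as an element of a space of dimension at most $2K$, harmonicity at each vertex of $L_{n+1}$ expresses $h|_{L_{n+2}}$ linearly in $s_n$ (after restricting to the appropriate subspace in rank-deficient cases), yielding linear transfer operators $T_n\colon s_n\mapsto s_{n+1}$. Constants are fixed by every $T_n$, and by \Tr{dim Lip} the essentially unique Lipschitz harmonic function $h_0$ provides one further invariant direction; but $h_0$ has \emph{non-zero} flux through $C_n$ (by the construction in \Prr{prop exist}), so the intersection of the zero-flux hyperplane with the generalised $1$-eigenspace of the dynamics is exactly the line of constants. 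A non-constant zero-flux $h$ therefore has a non-trivial component in some non-unit generalised eigenspace.

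In the periodic case (for example, $G$ a Cayley graph of $\Z$) the transfer is a single matrix $T$, and the conclusion follows from spectral analysis: an eigenvector for $\lambda\ne 1$ grows as $|\lambda|^n$ in one end and as $|\lambda|^{-n}$ in the other, giving exponential growth in at least one direction. Unit-circle eigenvalues other than $1$ are excluded because they would produce bounded non-constant harmonic functions on the recurrent graph \g, contradicting Theorem~(1.16) of \cite{woessBook} as used in the proof of \Lr{lem const}.

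The principal obstacle is the aperiodic case, where the operators $T_n$ vary with $n$ and no single spectrum is available. I would attempt the following compactness argument. Assuming for contradiction that $h$ is non-constant with strictly sub-exponential growth, set $f(n) := \max_{B_n}|h-h(o)|$, choose $v_n$ achieving this maximum, and consider $g_n := (h-h(o))/f(n)$. Since $|L_n|\le K$ and edges only cross adjacent layers, the rooted $R$-balls around the $v_n$ range over a \emph{finite} set of isomorphism types for each $R$, so a diagonal extraction produces a limit rooted graph $(G_\infty, o_\infty)$ that is still locally finite and quasi-isometric to $\R$, hence recurrent by the remark after \Tr{dim Lip}. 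The $g_n$ subconverge to a bounded harmonic function on $G_\infty$, which must be constant by recurrence. Combining this with a parallel extraction centred at $o$ (where $g_n(o) = 0$ forces the corresponding limit to vanish) and arguing that the zero-flux condition on the cuts $C_n$ passes to a non-trivial constraint on the limit, one hopes to contradict the non-constancy of $h$. Making the zero-flux condition genuinely survive the limit, and turning the discrepancy between the two pointwise limits into a quantitative lower bound that rules out every sub-exponential non-constant example, is where I expect the bulk of the technical difficulty to lie.
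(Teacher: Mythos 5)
The statement you are addressing is stated in the paper as a \emph{conjecture}, with no proof offered: the authors say only that every non-constant zero-flux example they know grows exponentially, and that they \emph{suspect} this is necessary. So there is no argument of the paper to compare yours against, and your proposal must stand on its own as an attempt at an open problem. It does not close it, and you say so yourself: the aperiodic case, which is the generic one (the hypothesis is only quasi-isometry to $\R$, with no periodicity), ends with the hope that a compactness argument will produce a contradiction. As sketched it does not: a constant limit of the rescaled functions $g_n$ on a limit graph $G_\infty$ says nothing directly about $h$ on $G$ itself, and converting the failure of non-constant limits into a quantitative exponential lower bound for $h$ is precisely the content of the conjecture, not a technicality. (Note also that to get the $g_n$ locally bounded on $R$-balls around $v_n$ you need $f(n+R)/f(n)$ bounded, which sub-exponentiality of $f$ only yields along a carefully chosen subsequence.)

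Even the periodic case has a gap as written. You assert that the zero-flux hyperplane meets the generalised $1$-eigenspace of the transfer operator only in the constants, citing \Tr{dim Lip} and the flux of the function from \Prr{prop exist}; but \Tr{dim Lip} only controls the \emph{Lipschitz} harmonic functions, i.e.\ the part of that eigenspace coming from Jordan blocks of size at most $2$. A Jordan block of size at least $3$ at the eigenvalue $1$ would produce a harmonic function of quadratic growth, which is sub-exponential and could a priori have zero flux, so your dichotomy (``constant or some non-unit eigenvalue contributes'') is incomplete. This particular hole is patchable: every genuine $1$-eigenvector gives a bounded, hence constant, harmonic function by recurrence, so the quadratic function has layer values $\tfrac{c}{2}n^2+O(n)$ with $c\neq 0$ and is therefore bounded below (or above) on both ends, contradicting the nonexistence of non-constant positive harmonic functions on a recurrent graph --- the same fact you invoke for unit-circle eigenvalues. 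You should also justify that the transfer operator is well defined at all; the parenthetical about ``restricting to the appropriate subspace in rank-deficient cases'' conceals that harmonicity at $L_{n+1}$ need not determine $h|_{L_{n+2}}$ uniquely, nor need every state extend to a harmonic function. With these repairs the periodic (e.g.\ Cayley graph of a two-ended group) case looks provable along your lines, and would be a worthwhile partial result; the conjecture in its stated generality remains open.
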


\Prr{prop exist} shows that the condition $\partial h(X,Y)= 0$ is crucial here.

\section{Consequences for vertex-transitive graphs} \label{sec VT}

In this section we employ \Tr{dim Lip} to obtain results about 2-ended vertex-transitive graphs as mentioned in the introduction.

\subsection{Connective constant}

We now prove  \Tr{muphi}, which provides a lower bound on the connective constant $\mu(G)$ of a 2-ended, cubic, vertex-transitive graph. We will refrain from recalling the definition of $\mu(G)$ as we will never use it, and refer the interested reader to \cite{GL16}.
\begin{proof}[Proof of \Tr{muphi}]
We follow the approach of Grimmett \& Li \cite{GL16}, who proved the statement for a cubic \Cg\ \G. The case of degree at least 4 had been settled earlier by the same authors \cite{GriLiBou}.   Their proof starts by constructing a non-constant harmonic function $g$ of $G$, with the property that $g$ is \defi{skew-difference-invariant}, i.e.\ \fe\ $u,v\in V(G)$, and $z\in \Aut(G)$, we have  
\labtequ{skew}{$g(z(v))-g(z(u)) = \sigma(z)(g(v)-g(u)),$}
where $\sigma(z)\in \{-1,1\}$. The rest of their proof (starting at the top of p.~34 of \cite{GL16}) does not rely on the structure of \g and can be repeated verbatim. Thus it suffices to construct such a function $g$.

For this, let $g: V(G) \to \R$ be a non-constant, Lipschitz, harmonic function, provided by \Tr{dim Lip}. Given any automorphism $z\in \Aut(G)$, the function $g \circ z$ is also in $LH(G)$. By \Tr{dim Lip}, $g \circ z$ equals $g$ up to a multiplicative constant $c_z\in \R$ and an additive constant $a_z$. Note that if $F$ is a finite cut of \g separating the two ends $\epsilon,\zeta$, then the net flow of $\partial g$ from the component of $G-F$ containing $\epsilon$ to the component of $G-F$ containing $\zeta$ is independent of the choice of $F$ up to a sign by \Lr{lem cuts}. It follows that $c_z=\pm 1$. Thus letting $\sigma(z):=c_z$ satisfies \eqref{skew}. 
\end{proof}

\subsection{No edge-transitive graphs with odd degrees}

By adapting our last argument we can recover the following result of Watkins, which is the main result of \cite{WatEdg}.

\begin{corollary}[\cite{WatEdg}] \label{cor ET}
If \g is a 2-ended, edge-transitive graph, then every vertex of \g has even degree. 
\end{corollary}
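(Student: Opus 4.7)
The plan is to adapt the approach of \Tr{muphi}. A connected edge-transitive graph is either vertex-transitive or bipartite with the two vertex orbits of $\Aut(G)$ being the colour classes; either way, $G$ is locally finite of bounded degree and $\Aut(G)$ acts on $V(G)$ with finitely many orbits. Together with 2-endedness, this implies (by a standard structural argument for 2-ended graphs with cocompact automorphism group) that $G$ is quasi-isometric to $\R$, so the hypotheses of \Tr{dim Lip} are satisfied (using the remark after it that bounded degree may replace recurrence). We thus obtain a non-constant Lipschitz harmonic function $g$ with $LH(G) = \mathrm{span}\{1, g\}$.

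For every $z \in \Aut(G)$, the function $g \circ z$ lies in $LH(G)$, and so $g \circ z = c_z g + a_z$ for scalars $c_z, a_z \in \R$. Exactly as in the proof of \Tr{muphi}, \Lr{lem cuts} pins down $c_z \in \{+1, -1\}$: the net flow of $\partial g$ through any finite cut separating the two ends is a nonzero constant (nonzero by \Lr{lem const}, well-defined up to sign by \Lr{lem cuts}), and this constant is multiplied by $c_z$ when we pass from $g$ to $g \circ z$, forcing $|c_z| = 1$.

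The new input is edge-transitivity. For any two edges $e, e' \in E(G)$, pick $z \in \Aut(G)$ with $z(e) = e'$. Since $\partial(g \circ z)(\vec{xy}) = \partial g(\vec{z(x) z(y)})$ and $\partial(g \circ z) = c_z \partial g$, we deduce $|\partial g(e')| = |\partial g(e)|$. Hence $|\partial g|$ takes a single value $c$ on $\vec{E}(G)$, and $c > 0$ because $g$ is non-constant and $G$ is connected.

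It then remains to fix any vertex $v$ and apply harmonicity: $\sum_{y \sim v}(g(y) - g(v)) = 0$, so the $d(v)$ summands $\pm c$ cancel, forcing the number of $+c$ terms to equal the number of $-c$ terms, and therefore $d(v)$ to be even. The only delicate point I expect is the verification that $G$ is quasi-isometric to $\R$ when $G$ is not vertex-transitive (the bipartite case), but in that case $G$ is still 2-ended with an automorphism group acting cofinitely on vertices, so the standard argument extends without difficulty.
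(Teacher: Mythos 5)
Your proof is correct and follows essentially the same route as the paper's: obtain $g$ from \Tr{dim Lip}, use the two-dimensionality of $LH(G)$ together with \Lr{lem cuts} to force $c_z=\pm 1$, and combine the resulting constancy of $|\partial g|$ over edges with harmonicity at a single vertex to force even degree. You argue directly where the paper argues by contradiction, and you are somewhat more explicit about verifying the hypotheses of \Tr{dim Lip} for a merely edge-transitive graph and about why the common value of $|\partial g|$ is nonzero, but these are presentational rather than substantive differences.
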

\begin{proof}
Suppose $o\in V(G)$ has odd degree, and let 
again $g: V(G) \to \R$ be a non-constant, Lipschitz, harmonic function, provided by \Tr{dim Lip}. Let $ou,ov$ be two edges of $o$ \st\ $|g(o)-g(u)| \neq |g(o)-g(v)|$, which exist because  by \Prr{prop knl} the net flow of $\partial g$ out of $o$ is 0, and $o$ has odd degree.

Since \g is edge-transitive, there is an automorphism $z\in \Aut(G)$ \st\ $z(ou) = ov$. Note that the function $g':=g \circ z$ is also in $LH(G)$. By \Tr{dim Lip}, $g \circ z$ equals $g$ up to a multiplicative constant $c_z\in \R$ and an additive constant $a_z$. Pick a finite cut $E(X,Y)$ separating the ends of \G. By \Lr{lem cuts} we have $\partial g(X,Y)=\pm  \partial g'(X,Y)$ because $\partial g'(X,Y) = \partial g(z(X),z(Y))$. Since $a_z$ does not influence $\partial g'$, we deduce $c_z = \pm 1$. But this contradicts our assumption $|g(o)-g(u)| \neq |g(o)-g(v)|$. 
\end{proof}

\subsection{Colourings}
\begin{proof}[Proof of \Cr{cor pm}]
Let $g\in LH(G)$ be as in the proof of \Tr{muphi}, that is,  non-constant and skew-difference-invariant. Repeating an idea of Grimmett \& Li \cite{GL16}, we let $a\leq b \leq c$ denote the values of $g(v)-g(o)$ for $v$ a neighbour of $o\in V(G)$, and note that since $a+b+c = 0$ by \Prr{prop knl}, one of the following three cases must occur if we rescale $g$ appropriately:
\begin{itemize}
	\item[Case 1:] \label{O i}  $(a,b,c) = (-1,0,1)$. In this case, colour all edges $uv$ with $g(u)-g(v)=0$ red. Notice that the remaining edges span a 2-regular subgraph $H$ of $G$. Each component of $H$ is a 2-way infinite path or a cycle. Since each edge $uv$ of such a cycle $C$ satisfies $|g(u)-g(v)|=1$, and the sum of $g(u)-g(v)$ along $C$ equals 0, we deduce that $C$ must contain an even number of edges. Thus we can colour each component of $H$ blue-green in an alternating way.
	\item[Case 2:] \label{O ii} $(a, b, c) = (-1/2,-1/2,1)$. Colour all edges $uv$ with $|g(u)-g(v)|=1$ red, and repeat the argument of Case~1 to colour the remaining edges blue-green.
	\item[Case 3:] \label{O iii} $a< b < 0, -a-b = c$. This case is easy: we can just colour each edge $uv$ according to $|g(u)-g(v)|$.
\end{itemize}
\end{proof}

\comment{
\subsection{bipartite} \label{sec bip}

\begin{proof}[Proof of \Cr{cor bip}]
We distinguish the same three cases as in the proof of \Cr{cor pm}.
\end{proof}
}

\section{A cubic 2-ended vertex-transitive graph which is not Cayley} \label{sec Watkins}

Watkins \cite{Wa90} asked whether every 2-ended vertex-transitive graph can be obtained via a certain product operation between a finite vertex-transitive graph and the 2-way infinite path. A positive answer to Watkins' question would imply that every 2-ended cubic vertex-transitive graph is a Cayley graph. Whether this is true was more recently explicitly asked by Grimmett \& Li \cite{GL16} due to its relevance to \Tr{muphi}. In this section we construct an example of a 2-ended cubic vertex-transitive graph which is not a Cayley graph, thus providing a negative answer to these   questions.

\medskip
Let $\Gha$ be the graph with $V(\Gha) = \{v_{n,k} \vert n \in \bZ, k \in \bZ/10\bZ\}$ and
\[
 E(\Gha) = \{v_{n,k} v_{n,k+1}\ \vert \ n \in \bZ, \ k \in \bZ/10\bZ\} \cup \{ v_{n,2k+1} v_{n+1,4k+2} \ \vert \ n \in \bZ, \ k \in \bZ/10\bZ\}.
\]
By construction, $\Gha$ is a cubic graph. A useful way to think of this graph is as a 2-way infinite stack of layers  $L_n := \{v_{n,k} \vert k \in \bZ/10\bZ\}$. Each $L_n$ spans a 10-cycle, and between any two layers $L_n$ and $L_{n+1}$ there is a Petersen-graph-like structure. Thus $\Gha$ is clearly 2-ended, since contracting each $L_n$ into a vertex transforms $\Gha$ into a 2-way infinite path. Checking that $\Gha$ has all other desired properties is rather routine, but we include the details for the sake of completeness. 

\begin{cla}
	$\Gha$ is vertex-transitive.
\end{cla}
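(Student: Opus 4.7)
The plan is to produce a subgroup of $\Aut(\Gha)$ acting transitively on $V(\Gha)$, generated by two explicit automorphisms: a layer shift and a ``rotation--reflection''.

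The easy ingredient is the layer shift $\tau:v_{n,k}\mapsto v_{n+1,k}$. Both the in-layer $10$-cycle edges and the inter-layer matching $v_{n,2k+1}v_{n+1,4k+2}$ are manifestly invariant under $\tau$, so $\tau\in\Aut(\Gha)$, which gives transitivity among layers. It therefore suffices to produce one further automorphism whose restriction to the layer $L_0$ is transitive.

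Finding that second automorphism is the main obstacle. Since the inter-layer matching sends only \emph{odd}-indexed vertices of $L_n$ upward to \emph{even}-indexed vertices of $L_{n+1}$, any automorphism that preserves each layer setwise must preserve the parity of the second coordinate; in particular it cannot send $v_{0,0}\mapsto v_{0,1}$. So the desired $\alpha$ must swap the upward and downward matchings, forcing the ansatz $\alpha(v_{n,k})=v_{-n,\,\epsilon_n k+a_n}$ with $\epsilon_n\in\{\pm 1\}$ (needed for cycle-edge preservation, since the only isometries of the $10$-cycle are affine) and $a_n$ odd (needed to swap parities). I will then impose that the image of $v_{n,2k+1}v_{n+1,4k+2}$ is again an inter-layer edge of $\Gha$; collecting the resulting equations modulo $10$ for all $k\in\bZ/5\bZ$ yields the two clean constraints
\[
\epsilon_{n+1}=-\epsilon_n,\qquad a_n\equiv 2a_{n+1}+5\pmod{10}.
\]
Setting $\epsilon_n=(-1)^n$ and solving the recursion with $a_0=1$ gives the $4$-periodic odd sequence $(a_0,a_1,a_2,a_3)=(1,3,9,7)$ (equivalently $a_n\equiv 3^n\pmod 5$), which I extend to all of $\bZ$ by periodicity.

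The remaining verification that this $\alpha$ lies in $\Aut(\Gha)$ is then routine: cycle edges are preserved because $k\mapsto(-1)^n k+a_n$ is an isometry of the $10$-cycle in $L_{-n}$, and the inter-layer check is exactly the recursion just derived. Finally, since $\epsilon_0=1$ and $a_0=1$, the map $\alpha$ fixes $L_0$ setwise and restricts there to $k\mapsto k+1$, so $\langle\alpha\rangle$ is transitive on $L_0$; combining with $\langle\tau\rangle$ we obtain transitivity on all of $V(\Gha)$, proving the claim.
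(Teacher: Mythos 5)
Your proposal is correct and follows essentially the same route as the paper: the paper also proves transitivity by exhibiting the layer shift together with a flip-and-rotate automorphism, and your $\alpha$ (with $\epsilon_n=(-1)^n$ and $(a_0,a_1,a_2,a_3)=(1,3,9,7)$) is exactly the paper's generator $\tau$, here derived from the parity/recursion constraints rather than stated and verified outright.
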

\begin{proof}

We introduce the following two maps $\sigma, \tau : V(\Gha) \to V(\Gha)$, which will allow us to map any vertex of $\Gha$ to any other:
\[
\sigma(v_{n,k}) = v_{n+1,k} \ \mbox{ and } \ \tau(v_{n,k}) = \begin{cases} v_{-n, k+1} & \mbox{ if } n \equiv 0 \mbox{ (mod }4\mbox{)}\\ v_{-n, 3-k} & \mbox{ if } n \equiv 1 \mbox{ (mod }4\mbox{)}\\v_{-n, k+9} & \mbox{ if } n \equiv 2 \mbox{ (mod }4\mbox{)}\\v_{-n, 7-k} & \mbox{ if } n \equiv 3 \mbox{ (mod }4\mbox{)} \end{cases}.
\] 
Thus $\sigma$ just shifts all the layers up by 1, whereas $\tau$ rotates the 10-cycle on $L_0$ by one position, which flips the stack of layers by mapping $L_n$ to $L_{-n}$, and inverts the orientation of the 10-cycles at layers of odd index.  
It is easy to see that each of $\tau$ and $\sigma$ is a bijection preserving edges, and so $\sigma, \tau \in \Aut(\Gha)$. Let 
$\Gpa := \langle \sigma, \tau \rangle \leq Aut(\Gha)$
be the group of automorphisms of $\Gha$ generated by  $ \sigma$ and $\tau$.
For any two vertices $v_{n,k}, v_{n',k'} \in V(\Gha)$, we have 
\begin{align*}
\sigma^{n'}\tau^{k'-k}\sigma^{-n}(v_{n,k}) = & \sigma^{n'}\tau^{k'-k}(v_{0,k})\\
= & \sigma^{n'}(v_{0,k'})\\
= & v_{n',k'}
\end{align*}
and so $\Gpa$ acts transitively on $V(\Gha)$. This proves Claim~1.
\end{proof}

A classical theorem of Sabidussi \cite{sab} states that a graph \g is isomorphic to a \Cg\ \iff\ there is a subgroup of $\Aut(G)$ acting \defi{regularly}, i.e.\ freely and transitively, on $V(G)$. For the purposes of this paper, they reader may use Sabidussi's theorem as the definition of a \defi{\Cg}. We will use this to prove that our $\Gha$ is not a \Cg. 

\begin{cla} \label{not-regular}
	$\Gpa$ is not a regular subgroup of $\Aut(\Gha)$.
\end{cla}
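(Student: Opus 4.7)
The plan is to show that the action of $\Gpa$ on $V(\Gha)$ is not free, which together with Claim~1 prevents $\Gpa$ from being regular. Concretely, I will exhibit a non-identity element $g\in\Gpa$ that fixes some vertex of $\Gha$; the candidate I have in mind is $g:=(\sigma\tau)^2$.

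The first step is to observe that $\sigma\tau$ acts on the layer index $n$ as the reflection $n\mapsto 1-n$ in all four cases of the definition of $\tau$; this is immediate upon composing each case with $\sigma\colon v_{m,j}\mapsto v_{m+1,j}$. Hence $(\sigma\tau)^2$ preserves every layer $L_n$ setwise.

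The second step is to compute the restriction of $(\sigma\tau)^2$ to $L_0$ explicitly. For $v_{0,k}$, the case $n=0\equiv 0\pmod{4}$ gives $\sigma\tau(v_{0,k})=v_{1,k+1}$, and then the case $n=1\equiv 1\pmod{4}$ gives $\sigma\tau(v_{1,k+1})=v_{0,3-(k+1)}=v_{0,2-k}$. So $(\sigma\tau)^2$ acts on $L_0$ as the reflection $v_{0,k}\mapsto v_{0,2-k}$ of the $10$-cycle, which fixes $v_{0,1}$ and $v_{0,6}$ but sends $v_{0,0}$ to $v_{0,2}$. Thus $(\sigma\tau)^2\in\mathrm{Stab}_{\Gpa}(v_{0,1})\setminus\{e\}$, so $\Gpa$ does not act freely on $V(\Gha)$, which proves Claim~\ref{not-regular}.

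The only real difficulty is the case-analysis bookkeeping, since $\tau$ is defined by cases on $n\bmod 4$ and one must apply the correct case at each step of the composition. A structural sanity check is that $\sigma\tau$ is a glide reflection exchanging layers symmetrically about the axis midway between $L_0$ and $L_1$, so $(\sigma\tau)^2$ must act as a non-trivial reflection on each layer it preserves; since any reflection of a $10$-cycle fixes exactly two vertices, at least one fixed vertex is guaranteed.
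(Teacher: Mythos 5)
Your proof is correct and follows the same strategy as the paper's: exhibit a non-identity element of $\Gpa$ that fixes a vertex (the paper uses $\tau^{-3}\sigma\tau\sigma$, which fixes $v_{0,0}$, while your $(\sigma\tau)^2$ acts on $L_0$ as $k\mapsto 2-k$ and fixes $v_{0,1}$), so the action is not free and hence not regular. The only quibble is with your closing sanity check: a reflection of a $10$-cycle can also fix zero vertices (when its axis passes through edge midpoints), but this is immaterial since your explicit computation exhibits the fixed vertex directly.
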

\begin{proof}

We have
\begin{align*}
\tau^{-3}\sigma\tau\sigma (v_{0,k}) & = \tau^{-3} \sigma \tau (v_{1,k})\\
& = \tau^{-3} \sigma (v_{-1, 3 - k})\\ & = \tau^{-3} (v_{0,3 - k})\\ & = v_{0,- k}.
\end{align*}
This implies $\tau^{-3}\sigma\tau\sigma(v_{0,0}) = v_{0,0}$, yet $\tau^{-3}\sigma\tau\sigma \not = 1_{\Gha}$ as $\tau^{-3}\sigma\tau\sigma(v_{0,1}) = v_{0,9}$. Thus the action of $G$ on $\Gha$ is not free, proving Claim~2.
\end{proof}

\medskip
We remind the reader the $n$th layer is denoted $L_n = \{v_{n,k} \vert \ k \in \bZ/10\bZ \}$ for $n \in \bZ$ and we define the partition $\sC := \{L_n\ \vert\ n \in \bZ\}$ of $V(\Gha)$.

\begin{cla} \label{fix-one-cycle}
	Let $\phi \in \Aut(\Gha)$ satisfy $\phi(L_a) = L_b$. If for some $\chi \in \Aut(\Gha)$ we have $\phi(x)=\chi(x)$ for every $x\in L_a$,  then  $\phi=\chi$. Moreover, $\phi$ preserves the partition $\sC := \{L_n\ \vert\ n \in \bZ\}$.
\end{cla}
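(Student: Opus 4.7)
My plan is to exploit two elementary structural facts about $\Gha$. First, each vertex $v_{n,k}$ has exactly three neighbours: two horizontal ones $v_{n,k\pm 1}$ in the 10-cycle $L_n$, plus one vertical neighbour in $L_{n+1}$ when $k$ is odd (namely $v_{n+1,2k}$) and in $L_{n-1}$ when $k$ is even. Second, the vertical-neighbour map on each layer $L_n$ is injective; from this I will deduce a \emph{common-neighbour lemma}: any two distinct vertices $x,y\in L_c$ have at most one common neighbour in $\Gha$, and any such common neighbour lies in $L_c$ (since $x,y$ cannot share a vertical neighbour, any common neighbour must be horizontal inside $L_c$, which happens exactly when $x,y$ are at distance $2$ in $L_c$).

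I would prove the second (``moreover'') assertion first. The restriction $\phi|_{L_a}$ is an isomorphism of 10-cycles onto $L_b$, so it either preserves or swaps the 2-colouring of the cycle by parity of the second index. Since horizontal edges of $L_a$ map to horizontal edges of $L_b$, for each $v\in L_a$ the vertical neighbour of $v$ must map to the vertical neighbour of $\phi(v)$. In particular, the five vertical neighbours of the odd-indexed vertices of $L_a$---which are precisely the even-indexed vertices of $L_{a+1}$---all lie in a single layer $L_c$ with $c\in\{b-1,b+1\}$: $\phi|_{L_a}$ sends the odd-indexed vertices of $L_a$ to one parity class in $L_b$, and the parity of a vertex of $L_b$ determines the direction of its vertical edge. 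Each remaining (odd-indexed) vertex $v_{a+1,2j+1}$ of $L_{a+1}$ is the unique common neighbour of $v_{a+1,2j}$ and $v_{a+1,2j+2}$ in $\Gha$, so $\phi(v_{a+1,2j+1})$ is forced to be the common neighbour of the two already-placed images in $L_c$, which lies in $L_c$ by the common-neighbour lemma. Hence $\phi(L_{a+1})=L_c$, and an induction on $|n-a|$ with $L_{a\pm 1}$ in the role of $L_a$ gives $\phi(L_n)\in\sC$ for every $n$.

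For the first assertion, I assume $\phi(x)=\chi(x)$ for every $x\in L_a$; then $\chi(L_a)=L_b$, so $\chi$ also preserves $\sC$ by what has just been shown. For each $v\in L_a$ the vertical neighbour $v'$ of $v$ must satisfy $\phi(v')=\chi(v')$, since each equals the unique neighbour of $\phi(v)=\chi(v)$ outside $L_b$. Thus $\phi$ and $\chi$ agree on the five even-indexed vertices of $L_{a+1}$ and the five odd-indexed vertices of $L_{a-1}$, and the common-neighbour lemma propagates this agreement to the remaining vertices of $L_{a+1}$ and $L_{a-1}$; induction on $|n-a|$ then yields $\phi=\chi$ on $V(\Gha)$. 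The main obstacle I anticipate is the argument establishing that the vertical images of $L_a$ land in a single layer $L_c$ rather than being split between $L_{b-1}$ and $L_{b+1}$; this is exactly where the rigidity of the parity 2-colouring of the 10-cycle under its automorphisms is essential.
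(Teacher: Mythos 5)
Your proof is correct and takes essentially the same route as the paper's: a layer-by-layer propagation outward from $L_a$, where images of vertical neighbours are forced because each vertex of a layer has a unique neighbour outside that layer, and the remaining vertices of the adjacent layers are then forced as unique common neighbours. You merely make explicit two details the paper's terse argument leaves implicit, namely the common-neighbour lemma and the parity argument showing the vertical images of $L_a$ land in a single layer $L_c$ rather than splitting between $L_{b-1}$ and $L_{b+1}$.
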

\begin{proof}

Observe that $\phi(v_{a+1,2k})$ and $\phi(v_{a-1,2k+1})$ are uniquely determined by $\phi(x), x\in L_a$ because each vertex in $L_b$ has exactly one neighbour outside $L_b$. This in turn uniquely determines $\phi(v_{a+1,2k+1})$ and $\phi(v_{a-1,2k})$ by a similar argument. Continuing like this, we see that $\phi(\{v_{a + \epsilon,k} \vert k \in \bZ/10\bZ\}) = \{v_{b \pm \epsilon,k} \vert k \in \bZ/10\bZ\}$ for $\epsilon \in \{-1,1\}$. By an inductive argument, this uniquely determines $\phi$, and moreover $\phi$ preserves $\sC$.
\end{proof}

\begin{cla} \label{cycle-pres}
	Any $\phi \in \Aut(\Gha)$ preserves the partition $\sC = \{L_n \vert n \in \bZ\}$.
\end{cla}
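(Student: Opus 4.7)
The plan is to reduce to Claim~3 by establishing an $\Aut(\Gha)$-invariant that distinguishes the \emph{horizontal} edges $v_{n,k}v_{n,k+1}$ of $\Gha$ (which decompose as $\bigsqcup_n E(L_n)$) from the \emph{vertical} edges $v_{n,2k+1}v_{n+1,4k+2}$ (forming a perfect matching with $5$ edges between each consecutive pair of layers, since the parametrisation depends only on $k \bmod 5$). The key structural observation is the parity dichotomy: at vertex $v_{n,j}$, the unique vertical edge goes up to $L_{n+1}$ when $j$ is odd, and down to $L_{n-1}$ when $j$ is even. Once the horizontal/vertical partition of $E(\Gha)$ is shown to be $\Aut(\Gha)$-invariant, the connected components of the horizontal subgraph are precisely the $L_n$, so every $\phi \in \Aut(\Gha)$ maps each $L_a$ to some layer $L_b$, and Claim~3 upgrades this to preservation of the whole partition $\sC$.

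The invariant I propose is the number of 8-cycles through each edge. The preliminary claim is that every 8-cycle of $\Gha$ uses exactly $2$ vertical edges, both between the same consecutive pair $L_n, L_{n+1}$. Let $v$ and $h$ denote the numbers of vertical and horizontal edges in a cycle. Then $v$ is even (the net layer-index change around the cycle is $0$), and two consecutive vertical edges in the cycle cannot share an endpoint (since each vertex has a unique vertical edge), so $h \geq v$; for an 8-cycle this forces $v \leq 4$. The case $v = 0$ would give an 8-cycle inside some $L_n$, impossible since $L_n$ is itself a 10-cycle. The case $v = 4$ is ruled out by a case analysis on the possible layer patterns (2-layer zigzag, 3-layer mountain, and 3-layer mixed up-down-down-up): the parity dichotomy, combined with the constraint that a length-$0$ horizontal segment would produce two distinct vertical edges incident to a single vertex, forces every even-parity horizontal segment to have length $\geq 2$ and every odd-parity one length $\geq 1$; summing these minima over the four segments yields total horizontal length $\geq 6 > h = 4$ in each pattern. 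Hence $v = 2$.

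Given this, a direct enumeration identifies exactly $10$ two-vertical 8-cycles between each consecutive pair $L_n, L_{n+1}$, indexed by unordered pairs $\{k,l\} \subseteq \bZ/5\bZ$. Incidence counting then shows that each of the $5$ vertical edges between $L_n, L_{n+1}$ lies on $4$ such 8-cycles, while each of the $10$ horizontal edges of $L_n$ lies on $3$ from each of the two adjacent layer pairs $L_{n-1}L_n$ and $L_nL_{n+1}$, hence on $6$ in total. Since $4 \neq 6$, the 8-cycle count is the desired $\Aut(\Gha)$-invariant, distinguishing horizontal from vertical edges, and the claim follows.

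The main obstacle is the parity case analysis in the preliminary step ruling out $v = 4$. Once the layer patterns are enumerated and the parities of each horizontal segment are pinned down by the odd/even dichotomy of the vertical edges, the verification that the total horizontal length exceeds $h = 4$ in each pattern is short; the remaining two-vertical enumeration and incidence counting is then routine.
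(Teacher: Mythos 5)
Your proof is correct, but it takes a genuinely different route from the paper's. The paper reduces everything to a separation invariant: by Claim~3 it suffices to show $\phi(L_0)=L_a$ for some $a$, and this is deduced from the assertion (left unverified, as ``not hard to see'') that the $L_n$ are the only 10-cycles of $\Gha$ whose removal leaves two infinite components. You instead make the horizontal/vertical edge partition $\Aut(\Gha)$-invariant via a local counting invariant: every 8-cycle contains exactly two vertical edges, lying between one consecutive pair of layers. I checked the key steps and they hold: the parity dichotomy (odd index goes up, even index goes down) does force every same-parity horizontal segment to have length at least $2$ and every mixed-parity one at least $1$, which kills $v=4$ (the zigzag gives $h\ge 8$, the mountain $h\ge 6$, both exceeding $4$); and in the $v=2$ case each of the $\binom{5}{2}=10$ pairs of vertical edges between $L_n$ and $L_{n+1}$ yields exactly one 8-cycle (the two arcs must have even lengths summing to $6$, hence $\{2,4\}$, and only one of the four arc choices achieves this), giving counts of $4$ per vertical edge and $3+3=6$ per horizontal edge, so the invariant separates the two edge classes. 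What your approach buys is a fully explicit, finitely checkable argument in place of the paper's appeal to the structure of separating 10-cycles; what it costs is the 8-cycle case analysis. One small economy: once the horizontal edge set is invariant, the layers are exactly the components of the horizontal subgraph, so $\phi$ permutes them and preservation of $\sC$ follows directly, without needing Claim~3 at all.
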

\begin{proof}
 
If $\phi$ does not fix $\sC$, then by Claim \ref{fix-one-cycle} we have $\phi(L_0) \not = L_a$ for every $a \in \bZ$. It is not hard to see that no 10-cycle of $\Gha$ other than the $L_i$ separates $\Gha$ into two infinite components. This contradicts that $\phi$ is an automorphism. (An alternative way of proving the claim is by using Watkins's theorem that every 2-ended vertex-transitive and edge-transitive graph has even degree \cite{Wa90}.)
\end{proof}

\begin{cla} \label{unique-determined}
	Any $\phi \in \Aut(\Gha)$ is uniquely determined by $\phi(v_{0,1})$ and $\phi(v_{0,2})$.
\end{cla}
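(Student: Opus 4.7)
The plan is to combine Claim \ref{cycle-pres} (which says any $\phi\in\Aut(\Gha)$ preserves the layer partition $\sC$) with Claim \ref{fix-one-cycle} (which says any $\phi$ is uniquely determined by its restriction to a single layer). Given these two facts, it suffices to show that $\phi(v_{0,1})$ and $\phi(v_{0,2})$ already determine $\phi$ on all of $L_0$.

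First I would invoke Claim \ref{cycle-pres} to conclude that $\phi(L_0) = L_b$ for some $b\in\bZ$. Hence $\phi$ restricts to a bijection from $L_0$ to $L_b$, and since $\phi$ is an automorphism of $\Gha$ preserving the edges of the spanning $10$-cycles, this restriction is an isomorphism of $10$-cycles.

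Next, I would use the well-known fact that an isomorphism between two $n$-cycles is uniquely determined by the images of two adjacent vertices: the image of the first vertex fixes the offset, and the image of the second (adjacent) vertex fixes the orientation. Since $v_{0,1}$ and $v_{0,2}$ are adjacent in $L_0$, their images $\phi(v_{0,1}), \phi(v_{0,2})$ must be adjacent in $L_b$, and knowing both values pins down $\phi|_{L_0}$ completely (concretely, one gets $\phi(v_{0,j}) = v_{b, k + \eps(j-1)}$, where $\phi(v_{0,1}) = v_{b,k}$ and $\eps\in\{\pm1\}$ is the sign making $\phi(v_{0,2}) = v_{b, k + \eps}$).

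Finally, given that $\phi|_{L_0}$ is determined, Claim \ref{fix-one-cycle} (applied with $a = 0$, $b$ as above) immediately yields that $\phi$ itself is determined on all of $V(\Gha)$. There is no real obstacle here: the argument is essentially a one-line consequence of the preceding claims, with the only small point being the observation that an isomorphism between two cycles is pinned down by its action on an edge.
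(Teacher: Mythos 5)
Your proof is correct and follows essentially the same route as the paper: both use Claim~\ref{cycle-pres} to place $\phi(L_0)$ in a single layer, observe that the images of the adjacent vertices $v_{0,1},v_{0,2}$ pin down the cycle isomorphism $\phi|_{L_0}$, and then invoke Claim~\ref{fix-one-cycle} to determine $\phi$ globally. Your explicit remark about cycle isomorphisms being determined by an edge is just a slightly more spelled-out version of the paper's ``in either case this uniquely determines $\phi(v_{0,k})$.''
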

\begin{proof}

Assume $\phi(v_{0,1}) = v_{a,b}$. Then by Claim \ref{cycle-pres}, $\phi(v_{0,2}) \in N(v_{a,b}) \cap \{v_{a,k} \vert k \in \bZ/10\bZ\} = \{v_{a,b+1},v_{a,b-1}\}$. In either case, by Claim \ref{cycle-pres} this uniquely determines $\phi(v_{0,k})$ for $k \in \bZ/10\bZ$. By Claim \ref{fix-one-cycle}, this uniquely determines $\phi$.
\end{proof}

We remark that this implies $\mbox{Stab}_{\Aut(\Gha)}(v_{0,0}) = \langle \tau^{-3}\sigma\tau\sigma \rangle = \bZ/2\bZ$. So $\langle \tau, \sigma \rangle = \Aut(\Gamma)$.

\begin{cla}
	$\Gha$ is not isomorphic to a Cayley graph.
\end{cla}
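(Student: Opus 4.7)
The plan is to invoke Sabidussi's theorem: I will assume for contradiction that some subgroup $H \leq \Aut(\Gha)$ acts regularly on $V(\Gha)$, and derive a contradiction from the structure of $\Aut(\Gha)$ already established. First I would extract from Claim~1 and the remark after Claim~\ref{unique-determined} the two key facts that $\Aut(\Gha) = \langle \sigma, \tau \rangle$ and that the stabilizer $\mbox{Stab}_{\Aut(\Gha)}(v_{0,0}) = \{1, \rho\}$ has order $2$, where $\rho := \tau^{-3}\sigma\tau\sigma$. Together with transitivity (Claim~1), orbit--stabilizer gives $|\Aut(\Gha)| = 2\,|V(\Gha)|$, so a regular subgroup $H$, whose cardinality must equal $|V(\Gha)|$, would necessarily have index exactly $2$ in $\Aut(\Gha)$.

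Next I would use that any index-$2$ subgroup is the kernel of a surjective homomorphism $\pi : \Aut(\Gha) \to \bZ/2\bZ$. Since $\bZ/2\bZ$ is abelian, $\pi$ is constant on conjugacy classes; every nontrivial vertex-stabilizer element is a conjugate of $\rho$ (because $\Aut(\Gha)$ is transitive on $V(\Gha)$), so the freeness of a candidate $H = \ker \pi$ at every vertex is equivalent to the single condition $\pi(\rho) = 1$. Conversely, once $\pi(\rho) = 1$ holds, the cosets $H$ and $\rho H$ exhaust $\Aut(\Gha)$ and a short bookkeeping argument using $\mbox{Stab}(v) \cdot H = \Aut(\Gha)$ shows $H$ is automatically transitive; so regularity is equivalent to $\pi(\rho) = 1$.

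The contradiction then falls out of a one-line computation in the abelianization of $\Aut(\Gha) = \langle \sigma, \tau \rangle$: because $\pi$ factors through the abelianization,
$$\pi(\rho) \;=\; \pi(\tau^{-3}\sigma\tau\sigma) \;=\; 2\pi(\sigma) - 2\pi(\tau) \;\equiv\; 0 \pmod{2}.$$
Hence $\pi(\rho) = 0$ is forced, contradicting $\pi(\rho) = 1$, so no regular $H$ exists and Sabidussi's theorem yields that $\Gha$ is not isomorphic to any Cayley graph. I do not expect a serious obstacle here; the only point requiring care is the cardinality bookkeeping that forces any regular subgroup to have index exactly $2$, and this is precisely where transitivity (Claim~1) and $|\mbox{Stab}(v_{0,0})| = 2$ (the remark after Claim~\ref{unique-determined}) combine. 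Everything else is an abelianization computation that exploits the fact that $\rho$ involves an even total number of occurrences of each of $\sigma$ and $\tau$.
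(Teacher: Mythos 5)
Your proof is correct, but it takes a genuinely different route from the paper's. The paper argues by brute force: given a transitive subgroup $T$, it locates elements $\sigma',\tau'\in T$ sending $v_{0,1}$ to $v_{1,1}$ and to $v_{0,2}$ respectively, uses the unique-determination claim to pin each of $\sigma',\tau'$ down to one of two explicit automorphisms, and then in each of the four resulting cases computes by hand a non-identity element of $T$ fixing a vertex. Your argument replaces this case analysis with a parity computation: since $\Aut(\Gha)$ is transitive with point stabilizers of order $2$ generated (up to conjugacy) by $\rho=\tau^{-3}\sigma\tau\sigma$, any regular subgroup $H$ satisfies $\Aut(\Gha)=H\cdot\mbox{Stab}(v_{0,0})$ and $H\cap\mbox{Stab}(v_{0,0})=\{1\}$, hence has index exactly $2$, is normal, and is the kernel of some $\pi:\Aut(\Gha)\to\bZ/2\bZ$; freeness forces $\pi(\rho)=1$, while the even exponent sums of $\sigma$ and $\tau$ in the word $\rho$ force $\pi(\rho)=0$ for every such $\pi$. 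This is cleaner and more conceptual --- it explains in one stroke why all four of the paper's cases fail (the elements $\tilde\sigma=\sigma\tau^{-1}\sigma\tau\sigma$ and $\tilde\tau=\sigma\tau\sigma$ agree with $\sigma$ and $\tau$ modulo the derived subgroup and squares) --- at the price of leaning on the remark that $\Aut(\Gha)=\langle\sigma,\tau\rangle$ and $|\mbox{Stab}(v_{0,0})|=2$, which the paper does establish just before this claim.

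One phrasing to repair: you cannot deduce index $2$ from the cardinality identity $|\Aut(\Gha)|=2\,|V(\Gha)|$, since both sides are countably infinite and $2\cdot\aleph_0=\aleph_0$. The correct justification is the coset bookkeeping you allude to afterwards: transitivity of $H$ gives $\Aut(\Gha)=H\cdot\mbox{Stab}(v_{0,0})=H\cup H\rho$, so $[\Aut(\Gha):H]\le 2$, and freeness gives $\rho\notin H$, so the index is exactly $2$. With that substitution the argument is complete; the converse direction (that $\pi(\rho)=1$ would make $\ker\pi$ transitive) is not actually needed for the contradiction.
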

\begin{proof}

Let $T \leq \Aut(\Gha)$ be a transitive subgroup. Thus we can find automorphisms $\sigma', \tau' \in T$ such that $\sigma'(v_{0,1}) = v_{1,1}$ and $\tau'(v_{0,1}) = v_{0,2}$. By Claim \ref{cycle-pres}, $T$ preserves the partition $\sC$. So either $\tau'(v_{0,2}) = v_{0,3}$ and $\tau' = \tau$ or $\tau'(v_{0,2}) = v_{0,1}$ and  
\[
\tau'(v_{n,k}) = \sigma \tau \sigma(v_{n,k}) = \tilde{\tau}(v_{n,k}) := \begin{cases} v_{-n, 3 - k} & \mbox{ if } n \equiv 0 \mbox{ (mod }4\mbox{)}\\ v_{-n, k-1} & \mbox{ if } n \equiv 1 \mbox{ (mod }4\mbox{)}\\v_{-n, 7-k} & \mbox{ if } n \equiv 2 \mbox{ (mod }4\mbox{)}\\v_{-n, k+1} & \mbox{ if } n \equiv 3 \mbox{ (mod }4\mbox{)} \end{cases}
\] 
by Claim \ref{unique-determined}. Similarly, either $\sigma'(v_{0,2}) = v_{1,2}$ and $\sigma' = \sigma$ or $\sigma'(v_{0,2}) = v_{1,0}$ and
\[
\sigma'(v_{n,k}) = \sigma \tau^{-1} \sigma \tau \sigma(v_{n,k}) = \tilde{\sigma}(v_{n,k}) := \begin{cases} v_{n+1, 2-k} & \mbox{ if } n \equiv 0 \mbox{ (mod }4\mbox{)}\\ v_{n+1, 4-k} & \mbox{ if } n \equiv 1 \mbox{ (mod }4\mbox{)}\\v_{n+1, 8-k} & \mbox{ if } n \equiv 2 \mbox{ (mod }4\mbox{)}\\v_{n+1, 6-k} & \mbox{ if } n \equiv 3 \mbox{ (mod }4\mbox{)} \end{cases}.
\] 
by Claim \ref{unique-determined}. By Claim \ref{not-regular}, if $\{\sigma', \tau'\} = \{\sigma, \tau\}$ then $T$ is not regular. If $\{\sigma', \tau'\} = \{\tilde{\sigma}, \tau\}$ then
\begin{align*}
\tau\tilde{\sigma}\tau\tilde{\sigma} (v_{0,k}) & = \tau \tilde{\sigma} \tau (v_{1,2-k})\\
& = \tau \tilde{\sigma} (v_{-1, k + 1})\\ 
& = \tau (v_{0,5 - k})\\ 
& = v_{0,6 - k}
\end{align*}
giving $\tau\tilde{\sigma}\tau\tilde{\sigma}(v_{0,3}) = v_{0,3}$ yet $\tau\tilde{\sigma}\tau\tilde{\sigma} \not = 1_{\Gha}$. Similarly, if $\{\sigma', \tau'\} = \{\sigma, \tilde{\tau}\}$ then
\begin{align*}
\tilde{\tau}\sigma\tilde{\tau}\sigma (v_{0,k}) & = \tilde{\tau}\sigma\tilde{\tau} (v_{1,k})\\
& = \tilde{\tau}\sigma (v_{-1, k-1})\\ 
& = \tilde{\tau} (v_{0,k-1})\\ 
& = v_{0,4 - k}
\end{align*}
giving $\tilde{\tau}\sigma\tilde{\tau}\sigma(v_{0,2}) = v_{0,2}$ yet $\tilde{\tau}\sigma\tilde{\tau}\sigma \not = 1_{\Gha}$. Lastly, if $\{\sigma', \tau'\} = \{\tilde{\sigma}, \tilde{\tau}\}$ then 
\begin{align*}
\tilde{\tau}\tilde{\sigma}\tilde{\tau}\tilde{\sigma} (v_{0,k}) & = \tilde{\tau} \tilde{\sigma} \tilde{\tau} (v_{1,2-k})\\
& = \tilde{\tau} \tilde{\sigma} (v_{-1, 1-k})\\ 
& = \tilde{\tau} (v_{0,5 + k})\\ 
& = v_{0,-2 - k}
\end{align*}
giving $\tilde{\tau}\tilde{\sigma}\tilde{\tau}\tilde{\sigma}(v_{0,9}) = v_{0,9}$ yet $\tilde{\tau}\tilde{\sigma}\tilde{\tau}\tilde{\sigma} \not = 1_{\Gha}$. Therefore $T$ is never regular, and so $\Gha$ is not a Cayley graph by Sabidussi's theorem mentioned above.
\end{proof}

\medskip
Combining the above claims proves \Tr{nonCay}. 

\comment{
	\begin{theorem} 
	$\Gha$ is a cubic 2-ended vertex-transitive graph which is not a Cayley graph.

	\qed
	\end{theorem} 
}

We have implicitly proved that $\Aut(\Gamma) = \langle \sigma, \tau \rangle$. Some further study, for which we thank Derek Holt, can show that \[ \Aut(\Gamma) = \langle  \sigma, \tau \vert \tau^{10}, (\tau^{-1} \sigma \tau \sigma)^2, \sigma^{-1} \tau^2 \sigma\tau^{-4}, (\sigma^{-2}\tau)^2 \rangle. \]

\section{Further problems} \label{sec open}
In this paper we have used harmonic functions to deduce various facts about 2-ended, vertex-transitive graphs. We feel that the technique can be exploited further to yield a better understanding of this class of graphs, possibly leading to a complete description. Towards this aim, we propose

\begin{problem} \label{prob eff}
Provide an effective enumeration of the family of 2-ended, \lf, vertex-transitive graphs.
\end{problem} 

By an \defi{effective enumeration} of a family \cf\ we mean an algorithm that outputs a sequence of finite strings \seq{S}, \st\ each $S_i$ \defi{encodes} an element of \cf, and each element of \cf\ is encoded by some $S_i$. We do not prescribe a specific way to encode our graphs, but one option is to use our partite presentations from \cite{partite}, which is a generalisation of group presentations to arbitrary vertex-transitive graphs. It would be already interesting to provide an effective enumeration of the cubic, 2-ended, vertex-transitive graphs 

\medskip
A vertex-transitive graph \g is said to be $n$-Cayley, if there is a subgroup $\Gamma$ of $\Aut(G)$ \st\ $\Gamma$ acts freely (i.e.\ without fixing any point) on $V(G)$ and has $n$ orbits of vertices. Thus \g is isomorphic to a \Cg\ \iff\ it is 1-Cayley by Sabidussi's aforementioned theorem. If $n = 2$ we say that \g is \defi{bi-Cayley}.

\begin{problem} \label{prob biC}
Is every 2-ended, cubic, vertex-transitive graph Cayley or bi-Cayley?
\end{problem} 
Note that this is a relaxation of Watkins' question that we negatively answered with \Tr{nonCay}. Our case analysis in the proof of Corollary~\ref{cor pm} is helpful for this problem: in cases 1 \& 3 it is not hard to obtain a positive answer. 

For graphs of degree higher than 3 we do not expect this to be true, as one can obtain such a graph as the cartesian product of an arbitrary finite vertex-transitive graph with the 2-way infinite path. But we expect that \fe\ $k\in \N$ \ti\ $f(k)\in \N$ \st\ every $k$-regular,  2-ended, vertex-transitive graph is $f(k)$-Cayley.

\begin{problem} \label{prob fk}
Provided lower and upper bounds for $f(k)$.
\end{problem} 

\medskip
Finally, it would be interesting to know whether \Cr{cor pm} generalises to graphs of arbitrary degree:

\begin{problem} \label{prob kcol}
Let $\g=(V,E)$ be a 2-ended, $k$-regular, vertex-transitive graph. Must \g be $k$-edge-colourable? Must $E$ decompose into $k$  perfect matchings?
\end{problem} 
Our harmonic function technique for \Cr{cor pm} seems useful for this general case.

\comment{
	\begin{lemma} \label{lem}

\end{lemma}
\begin{proof}

\end{proof}

\begin{problem} \label{}

\end{problem} 
}

\bibliographystyle{plain}
\bibliography{../collective}


\end{document}